\documentclass[11pt, final]{article}
\usepackage{a4}
\usepackage{amsmath}%
\usepackage{amstext}%
\usepackage{amssymb}%
\usepackage{showkeys}%
\usepackage{epsfig}%
\usepackage{cite}

\usepackage{listings}
\setcounter{MaxMatrixCols}{10}

\newtheorem{theorem}{Theorem}

\newtheorem{algorithm}[theorem]{Algorithm}

\newenvironment{remark}{\rem\rm}{\endrem}

\newcounter{unnumber}

\newenvironment{proof}{\prf\rm}{\hfill{$\blacksquare$}\endprf}
\newcommand{\R}{\mathbb{R}}%
\newcommand{\N}{\mathbb{N}}%
\newcommand{\ol}{\overline}%
\DeclareMathOperator*\inte{int}%
\DeclareMathOperator*\sqri{sqri}%
\DeclareMathOperator*\dom{dom}%
\DeclareMathOperator*\gr{Gr}%
\DeclareMathOperator*\id{Id}%
\DeclareMathOperator*\prox{prox}%
\DeclareMathOperator*\argmin{argmin}
\DeclareMathOperator*\sn{sgn}

\textwidth15cm \textheight22.5cm

\title{A primal-dual splitting algorithm for finding zeros of sums of maximally monotone operators}

\author{Radu Ioan Bo\c{t} \thanks{Department of Mathematics, Chemnitz University of Technology,
D-09107 Chemnitz, Germany, e-mail:
 radu.bot@mathematik.tu-chemnitz.de. Research partially supported by DFG (German Research Foundation), projects BO 2516/4-1 and WA922/1-3.} \and Ern\"{o} Robert Csetnek \thanks {Department of Mathematics, Chemnitz University of Technology, D-09107 Chemnitz, Germany, e-mail:
 robert.csetnek@mathematik.tu-chemnitz.de. Research supported by DFG (German Research Foundation), project BO 2516/4-1.} \and Andr\'e Heinrich
\thanks{Department of Mathematics, Chemnitz University of Technology, D-09107 Chemnitz, Germany, e-mail:
 andre.heinrich@mathematik.tu-chemnitz.de. Research supported by the European Union, the European Social Fund (ESF) and prudsys AG in Chemnitz.}}

\begin{document}
\maketitle

\noindent \textbf{Abstract.} We consider the primal problem of finding the zeros of the sum of a maximally monotone operator with the composition of another maximally monotone operator with a linear continuous operator and a corresponding dual problem formulated by means of the inverse operators. A primal-dual splitting algorithm which simultaneously solves the two problems in finite-dimensional spaces is presented. The scheme uses at each iteration separately the resolvents of the maximally monotone operators involved and it gives rise to a splitting algorithm for finding the zeros of the sum of compositions of maximally monotone operators with linear continuous operators. The iterative schemes are used for solving nondifferentiable convex optimization problems arising in image processing and in location theory.\vspace{1ex}

\noindent \textbf{Key Words.} maximally monotone operator, resolvent, operator splitting, subdifferential,
minimization algorithm, duality\vspace{1ex}

\noindent \textbf{AMS subject classification.} 47H05, 65K05, 90C25

\section{Introduction and preliminaries}\label{sec1}
In this paper we propose an iterative scheme for solving the inclusion problem
$$\mbox{find} \ x \in X \ \mbox{such that} \ 0 \in Ax + K^*BKx,$$
where $X$ and $Y$ are Hilbert spaces, $A:X \rightrightarrows X$ and $B:Y\rightrightarrows Y$ are maximally monotone operators and $K:X \rightarrow Y$ is a linear continuous operator, which makes separately use of the resolvents of $A$ and $B$. The necessity of having such an algorithm is given by the fact that the classical splitting algorithms have considerable limitations when employed on the inclusion problem under investigation in its whole generality. Indeed, the \textit{forward-backward algorithm} (see \cite{bauschke-book}) is a valuable option in this sense when $B$ is single-valued and cocoercive, while the use of \textit{Tseng's algorithm} (see \cite{tseng}) asks for $B$ being single-valued and Lipschitz continuous on a superset of the image of the domain of $A$ through $K$. On the other hand, the \textit{Douglas-Rachford algorithm} (see \cite{bauschke-book, dor}) asks for the maximal monotonicity of $A$ and $K^*BK$ and employs the resolvent of the latter, which can be expressed by means of the resolvent of $B$ only in some very exceptional situations (see \cite[Proposition 23.23]{bauschke-book}).

The aim of this article is to overcome this shortcoming by providing a \textit{primal-dual splitting algorithm} for simultaneously solving this inclusion problem and its dual inclusion problem in the sense of Attouch-Th\'era (see \cite{att-th, bauschke-bot, pennanen}), in the formulation of which the resolvents of $A$ and $B$ appear separately. In the case when $A$ and $B$ are subdifferentials of proper, convex and lower semicontinuous functions we rediscover as particular case the iterative method from \cite{ch-pck}. We also show how the provided primal-dual algorithm gives rise to a primal-dual iterative method for finding the zeros of the sum of compositions of maximally monotone operators with linear continuous operators. The latter will find application when solving nondifferentiable convex optimization problems arising in image processing and in location theory having in the objective the sum of (more than two) compositions of proper, convex and lower semicontinuous functions with linear continuous operators.

For another primal-dual splitting algorithm for simultaneously solving a primal inclusion problem and its Attouch-Th\'era-type dual inclusion problem, recently introduced in the literature, we refer the reader to \cite{br-combettes, combettes-pesquet}. By using a consecrated \textit{product space approach}, this method basically reformulates the primal-dual pair as the problem of finding the zeros of the sum of a maximally monotone operator and a monotone and Lipschitz continuous operator, which is then solved by making use of the relaxed version of Tseng's algorithm.

The structure of the paper is the following. The remaining of this section is dedicated to some elements of convex analysis and of the theory of maximally monotone operators. In Section \ref{sec2} we motivate and formulate the primal-dual splitting algorithm for solving the problem of finding the zeros of the sum of a maximally monotone operator with the composition of another maximally monotone operator with a linear continuous operator and its dual problem and investigate its convergence properties. In Section \ref{sec3} we formulate a primal-dual splitting algorithm for the problem of finding the zeros of the sum of compositions of maximally monotone operators with linear continuous operators, while in Section \ref{sec4} we employ the two primal-dual schemes for solving several classes of nondifferentiable convex optimization problems. Finally, we consider applications of the presented algorithms in image deblurring and denoising and when solving the Fermat-Weber location problem. For the latter we compare their performances to the ones of some iterative schemes recently introduced in the literature.

In what follows we recall some elements of convex analysis and of the theory of maximally monotone operators in Hilbert spaces and refer the reader in this respect to the books \cite{b-hab, bauschke-book, EkTem, simons, Zal-carte}.

Let $X$ be a real Hilbert space with \textit{inner product} $\langle\cdot,\cdot\rangle$ and associated \textit{norm} $\|\cdot\|=\sqrt{\langle \cdot,\cdot\rangle}$. For a function $f:X\rightarrow\overline{\R}$, where $\overline{\R}:=\R\cup\{\pm\infty\}$ is the extended real line, we denote by $\dom f=\{x\in X:f(x)<+\infty\}$ its \emph{effective domain} and say that $f$ is \emph{proper} if $\dom f\neq\emptyset$ and $f(x)>-\infty$ for all $x\in X$.  Let $f^*:X \rightarrow \overline \R$, $f^*(u)=\sup_{x\in X}\{\langle u,x\rangle-f(x)\}$ for all $u\in X$, be the \emph{conjugate function} of $f$. The \emph{subdifferential} of $f$ at $x\in f^{-1}(\R)$ is the set $\partial f(x):=\{u\in X:f(y)\geq f(x)+\langle u,y-x\rangle \ \forall y\in X\}$. We take by convention $\partial f(x):=\emptyset$, if $x\notin f^{-1}(\R)$. For every $\gamma >0$ and every $x \in X$ it holds $\partial(\gamma f)(x) = \gamma \partial f(x)$. When $Y$ is another Hilbert space and $K:X \rightarrow Y$ a linear continuous operator, then $K^* : Y \rightarrow X$, defined by $\langle K^*y,x\rangle = \langle y,Kx \rangle$ for all $(x,y) \in X \times Y$, denotes the \textit{adjoint operator} of $K$.

Let $C\subseteq X$ be a nonempty set. The \emph{indicator function} of $C$, $\delta_C:X\rightarrow \overline{\R}$, is the function which takes the value $0$ on $C$ and $+\infty$ otherwise. The subdifferential of the indicator function is the \emph{normal cone} of $C$, that is $N_C(x)=\{u\in X:\langle u,y-x\rangle\leq 0 \ \forall y\in C\}$, if $x\in C$ and $N_C(x)=\emptyset$ for $x\notin C$. If $C$ is a convex set, we denote by $\sqri C:=\{x\in C:\cup_{\lambda>0}\lambda(C-x) \ \mbox{is a closed linear subspace of} \ X\}$ its \emph{strong quasi-relative interior}. The strong quasi-relative interior of $C$ is a superset of the topological interior of $C$, i.e., $\inte C\subseteq\sqri C$ (in general this inclusion may be strict). If $X$ is finite-dimensional, than $\sqri C$ coincides with the \emph{relative interior} of $C$, which is the interior of $C$ with respect to the affine hull of this set. For more results relating to generalized interiority-type notions we refer the reader to \cite{bot-csetnek, Zal-carte, bauschke-book, simons, b-hab}.

For an arbitrary set-valued operator $A:X\rightrightarrows X$ we denote by $\gr A=\{(x,u)\in X\times X:u\in Ax\}$ its \emph{graph}, by $\dom A=\{x \in X : Ax \neq \emptyset\}$ its \emph{domain} and by $A^{-1}:X\rightrightarrows X$ its \emph{inverse operator}, defined by $(u,x)\in\gr A^{-1}$ if and only if $(x,u)\in\gr A$.  We say that $A$ is \emph{monotone} if $\langle x-y,u-v\rangle\geq 0$ for all $(x,u),(y,v)\in\gr A$. A monotone operator $A$ is said to be \emph{maximally monotone}, if there exists no proper monotone extension of the graph of $A$ on $X\times X$. Notice that the subdifferential of a proper, convex and lower semicontinuous function is a maximally monotone operator (cf. \cite{rock}). A single-valued linear operator $A:X \rightarrow X$ is said to be \textit{skew}, if $\langle x,Ax\rangle =0$ for all $x \in X$. The skew operators are maximally monotone and if they are not identical to zero and the dimension of $X$ is greater than or equal to $2$, then they fail to be subdifferentials (see \cite{simons}). When $f:X \rightarrow \overline \R$ is a proper, convex and lower semicontinuous it holds $(\partial f)^{-1} = \partial f^*$.  The \emph{resolvent} of $A$, $J_A:X \rightrightarrows X$, is defined by $J_A=(\id_X+A)^{-1}$, where $\id_X :X \rightarrow X, \id_X(x) = x$ for all $x \in X$, is the \textit{identity operator} on $X$. Moreover, if $A$ is maximally monotone, then $J_A:X \rightarrow X$ is single-valued and maximally monotone (cf. \cite[Proposition 23.7 and Corollary 23.10]{bauschke-book}). For an arbitrary $\gamma>0$ we have (see \cite[Proposition 23.2]{bauschke-book})
$$p\in J_{\gamma A}x \ \mbox{if and only if} \ (p,\gamma^{-1}(x-p))\in\gr A$$
and (see \cite[Proposition 23.18]{bauschke-book})
\begin{equation}\label{j-inv-op}
J_{\gamma A}+\gamma J_{\gamma^{-1}A^{-1}}\circ \gamma^{-1}\id\nolimits_X=\id\nolimits_X.
\end{equation}
When $f:X \rightarrow \overline \R$ is a proper, convex and lower semicontinuous function and $\gamma > 0$, for every $x \in X$ we denote by $\prox_{\gamma f}(x)$ the \textit{proximal point} of parameter $\gamma$ of $f$ at $x$, which is the unique optimal solution of the optimization problem
\begin{equation}\label{prox-def}\inf_{y\in X}\left \{f(y)+\frac{1}{2\gamma}\|y-x\|^2\right\}.
\end{equation}
Notice that $J_{\gamma\partial f}=(\id_X+\gamma\partial f)^{-1}=\prox_{\gamma f}$, thus  $\prox_{\gamma f} :X \rightarrow X$ is a single-valued operator fulfilling the extended\textit{ Moreau's decomposition formula}
\begin{equation}\label{prox-f-star}
\prox\nolimits_{\gamma f}+\gamma\prox\nolimits_{(1/\gamma)f^*}\circ\gamma^{-1}\id\nolimits_X=\id\nolimits_X.
\end{equation}
Let us also recall that the function $f:X \rightarrow \overline \R$ is said to be \emph{strongly convex} (with modulus $\gamma >0$), if $f-\frac{\gamma}{2}\|\cdot\|^2$ is a convex function.

Finally, we notice that for $f=\delta_C$, where $C\subseteq X$ is a nonempty closed and convex set, it holds
\begin{equation}\label{projection}
J_{\gamma N_C}=J_{N_C}=J_{\partial \delta_C} = (\id\nolimits_X+N_C)^{-1}=\prox\nolimits_{\delta_C}=P_C,
\end{equation}
where  $P_C :X \rightarrow C$ denotes the \emph{projection operator} on $C$ (see \cite[Example 23.3 and Example 23.4]{bauschke-book}).

\section{A primal-dual splitting algorithm for finding the zeros of $A + K^*BK$}\label{sec2}

For $X$ and $Y$ real Hilbert spaces, $A:X\rightrightarrows X$ and $B:Y\rightrightarrows Y$ maximally monotone operators and $K:X\rightarrow Y$ a linear and continuous operator we consider the problem of finding the pairs $(\widehat{x},\widehat{y}) \in X \times Y$ fulfilling the system of inclusions
\begin{equation}\label{syst-incl-2}
Kx\in B^{-1}y \ \mbox{and} \ -K^*y\in Ax.
\end{equation}
If $(\widehat{x},\widehat{y})$ fulfills \eqref{syst-incl-2}, then $\widehat x$ is a solution of the primal inclusion problem
\begin{equation}\label{ecu1}
\mbox{find} \ x \in X \ \mbox{such that} \  0\in Ax + K^*BKx
\end{equation}
and $\widehat y$ is a solution of its dual inclusion problem in the sense of Attouch-Th\'era
\begin{equation}\label{ecu2}
\mbox{find} \ y \in Y \ \mbox{such that} \ 0\in B^{-1}y-KA^{-1}(-K^*)y.
\end{equation}
On the other hand, if $\widehat x \in X$ is a solution of the problem \eqref{ecu1}, then there exists a solution $\widehat y$ of \eqref{ecu2} such that  $(\widehat{x},\widehat{y})$ fulfills \eqref{syst-incl-2}, while, if $\widehat y \in Y$ is a solution of the problem \eqref{ecu2}, then there exists a solution $\widehat x$ of \eqref{ecu1} such that $(\widehat{x},\widehat{y})$ fulfills \eqref{syst-incl-2}. We refer the reader to \cite{alduncin, att-th, bauschke-bot, br-combettes, eckstein-ferris, pennanen} for more algorithmic and theoretical aspects concerning the primal-dual pair of inclusion problems
\eqref{ecu1}-\eqref{ecu2}.

For all $\sigma, \tau > 0$  it  holds
\begin{equation}\label{motiv}
\begin{aligned}
(\widehat{x},\widehat{y}) \ \mbox{is a solution of} \  \eqref{syst-incl-2} \Leftrightarrow \widehat y + \sigma K\widehat x \in & \ (\id\nolimits_Y + \sigma B^{-1})\widehat y \ \mbox{and} \ \widehat x - \tau K^*\widehat y \in (\id\nolimits_X + \tau A)\widehat x\\
\Leftrightarrow \widehat y = J_{\sigma B^{-1}}(\widehat y + \sigma K\widehat x) & \ \mbox{and}  \ \widehat x \in J_{\tau A}(\widehat x - \tau K^*\widehat y).
\end{aligned}
\end{equation}
The above equivalences motivate the following algorithm for solving \eqref{syst-incl-2}.

\begin{algorithm}\label{alg-2-op}$ $

\noindent\begin{tabular}{rl}
\verb"Initialization": & \verb"Choose" $\sigma,\tau>0$ \verb"such that" $\sigma\tau\|K\|^2<1$ \verb"and" $(x^0,y^0)\in X\times Y$.\\
& \verb"Set" $\ol{x}^0:=x^0$.\\
\verb"For" $n\geq 0$ \verb"set": & $y^{n+1}:=J_{\sigma B^{-1}}(y^n+\sigma K\ol{x}^n)$\\
& $x^{n+1}:=J_{\tau A}(x^n-\tau K^*y^{n+1})$\\
& $\ol{x}^{n+1}:=2x^{n+1}-x^n$
\end{tabular}

\end{algorithm}

\begin{theorem}\label{th-2-op} Assume that the system of inclusions \eqref{syst-incl-2} has a solution $(\widehat{x}, \widehat{y})\in X\times Y$  and  let $(x^n,\ol{x}^n,y^n)_{n \geq 0}$ be the sequence generated by Algorithm \ref{alg-2-op}. The following statements are true:

(i) For any $n \geq 0$ it holds
\begin{equation}\label{bound-seq} \frac{\|x^n-\widehat{x}\|^2}{2\tau}+(1-\sigma\tau\|K\|^2)\frac{\|y^n-\widehat{y}\|^2}{2\sigma}\leq
\frac{\|x^0-\widehat{x}\|^2}{2\tau}+\frac{\|y^0-\widehat{y}\|^2}{2\sigma},
\end{equation}
thus the sequence $(x^n,y^n)_{n \geq 0}$ is bounded.

(ii) If $X$ and $Y$ are finite-dimensional, then the sequence $(x^n,y^n)_{n \geq 0}$ converges to a solution of the system of inclusions \eqref{syst-incl-2}.
\end{theorem}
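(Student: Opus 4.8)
The plan is to prove (i) as a Fejér-type estimate and then deduce (ii) from it.

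\emph{Part (i).} First I would turn the two resolvent steps into inclusions by means of the characterization $p\in J_{\gamma C}x \Leftrightarrow \gamma^{-1}(x-p)\in Cp$. Applied to the updates this gives
\[
\frac{y^n-y^{n+1}}{\sigma}+K\ol{x}^n\in B^{-1}y^{n+1},\qquad \frac{x^n-x^{n+1}}{\tau}-K^*y^{n+1}\in Ax^{n+1},
\]
whereas the solution satisfies $K\widehat{x}\in B^{-1}\widehat{y}$ and $-K^*\widehat{y}\in A\widehat{x}$ by \eqref{syst-incl-2}. Writing the monotonicity inequality for $B^{-1}$ at the pair $(y^{n+1},\widehat y)$ and for $A$ at $(x^{n+1},\widehat x)$ and adding them, the terms carrying $K$ and $K^*$ merge into a single cross term $2\langle y^{n+1}-\widehat y, K(\ol{x}^n-x^{n+1})\rangle$. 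Here I would insert $\ol{x}^n=2x^n-x^{n-1}$ (treating $n=0$ separately, where $\ol{x}^0=x^0$) to write $\ol{x}^n-x^{n+1}=(x^n-x^{n+1})-(x^{n-1}-x^n)$, which is exactly what makes this term telescope.

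\emph{Completing (i).} Next I would convert each inner product $\langle x^{n+1}-\widehat x,x^n-x^{n+1}\rangle$ into norms via the three-point identity $2\langle u,v\rangle=\|u+v\|^2-\|u\|^2-\|v\|^2$, producing the descent terms $-\tau^{-1}\|x^{n+1}-x^n\|^2$, $-\sigma^{-1}\|y^{n+1}-y^n\|^2$ and the telescoping energy $E_n:=\tau^{-1}\|x^n-\widehat x\|^2+\sigma^{-1}\|y^n-\widehat y\|^2$. The surviving part of the cross term, $2\langle y^{n+1}-y^n,K(x^n-x^{n-1})\rangle$, is the one to absorb; I would bound it by $2\langle y^{n+1}-y^n,K(x^n-x^{n-1})\rangle\le\lambda\|y^{n+1}-y^n\|^2+\lambda^{-1}\|K\|^2\|x^n-x^{n-1}\|^2$ with $\lambda$ chosen in the interval $\tau\|K\|^2<\lambda<\sigma^{-1}$, which is nonempty precisely because $\sigma\tau\|K\|^2<1$. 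Then the $y$-difference is strictly dominated by its descent term, while the $x$-difference appears with coefficient $b:=\lambda^{-1}\tau\|K\|^2<1$. Setting $\phi_n:=2\langle y^n-\widehat y,K(x^n-x^{n-1})\rangle$ and $G_n:=E_n+\phi_n+\tau^{-1}\|x^n-x^{n-1}\|^2$, I expect the one-step inequality $G_{n+1}\le G_n-(1-b)\tau^{-1}\|x^n-x^{n-1}\|^2-(\sigma^{-1}-\lambda)\|y^{n+1}-y^n\|^2$, so $G_n$ is nonincreasing with $G_1\le G_0=E_0$. Finally, minimizing the quadratic $u\mapsto\tau^{-1}\|u\|^2+2\langle K^*(y^n-\widehat y),u\rangle$ yields $\phi_n+\tau^{-1}\|x^n-x^{n-1}\|^2\ge-\tau\|K\|^2\|y^n-\widehat y\|^2$, which shows $G_n\ge\tau^{-1}\|x^n-\widehat x\|^2+(\sigma^{-1}-\tau\|K\|^2)\|y^n-\widehat y\|^2\ge0$ and, combined with $G_n\le E_0$, gives exactly \eqref{bound-seq} after dividing by $2$. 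Boundedness follows since $1-\sigma\tau\|K\|^2>0$.

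\emph{Part (ii).} In finite dimensions, $G_n$ is nonincreasing and bounded below by $0$, hence convergent, so summing the one-step inequality gives $\sum_n\|x^{n+1}-x^n\|^2<\infty$ and $\sum_n\|y^{n+1}-y^n\|^2<\infty$; in particular $\|x^{n+1}-x^n\|\to0$, $\|y^{n+1}-y^n\|\to0$ and $\ol{x}^n-x^n=x^n-x^{n-1}\to0$. By boundedness some subsequence satisfies $(x^{n_k},y^{n_k})\to(x^*,y^*)$, and the vanishing of consecutive differences forces $x^{n_k+1}\to x^*$, $y^{n_k+1}\to y^*$, $\ol{x}^{n_k}\to x^*$. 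Passing to the limit along $n_k$ in the two inclusions above, the right-hand sides tend to $Kx^*$ and $-K^*y^*$; since the graphs of the maximally monotone operators $A$ and $B^{-1}$ are closed (and strong and weak convergence coincide in finite dimensions), I obtain $Kx^*\in B^{-1}y^*$ and $-K^*y^*\in Ax^*$, i.e. $(x^*,y^*)$ solves \eqref{syst-incl-2}. To upgrade this to convergence of the whole sequence, note that $\phi_n\to0$ (as $\|y^n-\widehat y\|$ is bounded and $\|x^n-x^{n-1}\|\to0$) and $\tau^{-1}\|x^n-x^{n-1}\|^2\to0$, so $E_n=G_n-\phi_n-\tau^{-1}\|x^n-x^{n-1}\|^2$ converges for \emph{every} solution $(\widehat x,\widehat y)$. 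Applying this with $(x^*,y^*)$, the limit of $\tau^{-1}\|x^n-x^*\|^2+\sigma^{-1}\|y^n-y^*\|^2$ exists, and evaluating along $n_k$ shows it equals $0$; hence $(x^n,y^n)\to(x^*,y^*)$.

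\emph{Main obstacle.} The delicate point is the bookkeeping in part (i): correctly merging and telescoping the cross terms through the extrapolation $\ol{x}^n=2x^n-x^{n-1}$, and then choosing the single parameter $\lambda$ so that simultaneously the $y$-difference is strictly dominated, the $x$-difference coefficient stays below $1$, and the Lyapunov functional $G_n$ is bounded below — all three requirements resting on the lone hypothesis $\sigma\tau\|K\|^2<1$. Once $E_n$ is shown to converge along every solution, the finite-dimensional argument in (ii) is comparatively routine.
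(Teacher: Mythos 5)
Your proposal is correct and follows essentially the same route as the paper: the resolvent characterizations yield the monotonicity inequalities, the extrapolation $\ol{x}^n=2x^n-x^{n-1}$ telescopes the cross term, the residual $2\langle y^{n+1}-y^n,K(x^n-x^{n-1})\rangle$ is absorbed by Young's inequality under $\sigma\tau\|K\|^2<1$ (the paper's choice corresponds to $\lambda=\sqrt{\tau/\sigma}\,\|K\|$, the geometric mean of your two endpoints), and part (ii) proceeds by subsequence extraction, graph closedness, and convergence of the Fej\'{e}r-type energy with respect to $(x^*,y^*)$. Your packaging as a monotone Lyapunov functional $G_n$ with an Opial-style conclusion, versus the paper's explicit summation from $0$ to $N-1$ (and re-summation from $n_k$) with a boundary-term estimate, is only an organizational difference; all steps check out.
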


\begin{proof} (i) For any $n \geq 0$ the iterations of  Algorithm \ref{alg-2-op} yield that
\begin{equation}\label{gr-A}
\left(x^{n+1},\frac{1}{\tau}(x^n-\tau K^*y^{n+1}-x^{n+1})\right)\in\gr A,
\end{equation}
hence the monotonicity of $A$ implies
\begin{equation}\label{mon-A}
0\leq \left \langle x^{n+1}-\widehat{x},\frac{1}{\tau}(x^n-x^{n+1})-K^*y^{n+1}+K^*\widehat{y}\right \rangle.
\end{equation}

Similarly, for any $n \geq 0$ we have
\begin{equation}\label{gr-B}\left(y^{n+1},\frac{1}{\sigma}(y^n+\sigma K\ol{x}^n-y^{n+1})\right)\in\gr B^{-1},
\end{equation} thus
\begin{equation}\label{mon-B}
0\leq \left \langle K\ol{x}^n+\frac{1}{\sigma}(y^n-y^{n+1})-K\widehat{x},y^{n+1}-\widehat{y} \right \rangle.
\end{equation}

On the other hand, for any $n \geq 0$ we have that
$$\|x^{n+1}-\widehat{x}\|^2+ \left \langle x^{n+1}-\widehat{x},\frac{1}{\tau}(x^n-x^{n+1})-K^*y^{n+1}+K^*\widehat{y} \right\rangle=$$
$$\left\langle x^{n+1}-\widehat{x},\frac{1}{\tau}(x^n-\widehat{x})+\left(1-\frac{1}{\tau}\right)(x^{n+1}-\widehat{x})-K^*y^{n+1}+K^*\widehat{y} \right \rangle=$$
$$\frac{1}{\tau}\langle x^{n+1}-\widehat{x},x^n-\widehat{x}\rangle+\left(1-\frac{1}{\tau}\right)\|x^{n+1}-\widehat{x}\|^2+
\langle K(x^{n+1}-\widehat{x}),-y^{n+1}+\widehat{y}\rangle,$$ hence
$$\frac{1}{\tau}\|x^{n+1}-\widehat{x}\|^2+ \left \langle x^{n+1}-\widehat{x},\frac{1}{\tau}(x^n-x^{n+1})-K^*y^{n+1}+K^*\widehat{y}\right\rangle=$$
$$\frac{1}{\tau}\langle x^{n+1}-\widehat{x},x^n-\widehat{x}\rangle+\langle K(x^{n+1}-\widehat{x}),-y^{n+1}+\widehat{y}\rangle=$$
$$-\frac{1}{2\tau}\|x^{n+1}-x^n\|^2+\frac{1}{2\tau}\|x^{n+1}-\widehat{x}\|^2+\frac{1}{2\tau}\|\widehat{x}-x^n\|^2+
\langle K(x^{n+1}-\widehat{x}),-y^{n+1}+\widehat{y}\rangle,$$ where, for deriving the last formula,
we use the identity
$$\langle a,b\rangle=-\frac{1}{2}\|a-b\|^2+\frac{1}{2}\|a\|^2+\frac{1}{2}\|b\|^2 \ \forall a,b\in X.$$
Consequently, for any $n \geq 0$ it holds
\begin{equation}\label{eq1}
\begin{aligned}
\frac{1}{2\tau}\|x^{n+1}-\widehat{x}\|^2+ \left \langle x^{n+1}-\widehat{x},\frac{1}{\tau}(x^n-x^{n+1})-K^*y^{n+1}+K^*\widehat{y} \right \rangle=\\
-\frac{1}{2\tau}\|x^{n+1}-x^n\|^2+\frac{1}{2\tau}\|x^n-\widehat{x}\|^2+ \langle K(x^{n+1}-\widehat{x}),-y^{n+1}+\widehat{y}\rangle.
\end{aligned}
\end{equation}
Thus, by combining \eqref{mon-A} and \eqref{eq1}, we get for any $n \geq 0$
\begin{equation}\label{ineq-x}\frac{1}{2\tau}\|x^{n+1}-\widehat{x}\|^2\leq
-\frac{1}{2\tau}\|x^{n+1}-x^n\|^2+\frac{1}{2\tau}\|x^n-\widehat{x}\|^2+
\langle K(x^{n+1}-\widehat{x}),-y^{n+1}+\widehat{y}\rangle.\end{equation}

By proceeding in analogous manner we obtain the following estimate for any $n \geq 0$
$$\|y^{n+1}-\widehat{y}\|^2+\left\langle K\ol{x}^n+\frac{1}{\sigma}(y^n-y^{n+1})-K\widehat{x},y^{n+1}-\widehat{y}\right\rangle=$$
$$\left \langle \frac{1}{\sigma}(y^n-\widehat{y})+\left(1-\frac{1}{\sigma}\right)(y^{n+1}-\widehat{y})+K\ol{x}^n-K\widehat{x}, y^{n+1}-\widehat{y}\right\rangle=$$
$$\frac{1}{\sigma}\langle y^n-\widehat{y},y^{n+1}-\widehat{y}\rangle+\left(1-\frac{1}{\sigma}\right)\|y^{n+1}-\widehat{y}\|^2 + \langle K(\ol{x}^n-\widehat{x}),y^{n+1}-\widehat{y}\rangle,$$
hence
$$\frac{1}{\sigma}\|y^{n+1}-\widehat{y}\|^2+\left\langle K\ol{x}^n+\frac{1}{\sigma}(y^n-y^{n+1})-K\widehat{x},y^{n+1}-\widehat{y}\right\rangle=$$
$$\frac{1}{\sigma}\langle y^n-\widehat{y},y^{n+1}-\widehat{y}\rangle+\langle K(\ol{x}^n-\widehat{x}),y^{n+1}-\widehat{y}\rangle=
$$$$-\frac{1}{2\sigma}\|y^{n+1}-y^n\|^2+\frac{1}{2\sigma}\|y^{n+1}-\widehat{y}\|^2+\frac{1}{2\sigma}\|\widehat{y}-y^n\|^2+\langle K(\ol{x}^n-\widehat{x}),y^{n+1}-\widehat{y}\rangle.$$
From here we obtain for any $n \geq 0$
\begin{equation}\label{eq2}
\begin{aligned}
\frac{1}{2\sigma}\|y^{n+1}-\widehat{y}\|^2+\left\langle K\ol{x}^n+\frac{1}{\sigma}(y^n-y^{n+1})-K\widehat{x},y^{n+1}-\widehat{y}\right\rangle=\\
-\frac{1}{2\sigma}\|y^{n+1}-y^n\|^2+\frac{1}{2\sigma}\|y^n-\widehat{y}\|^2+
\langle K(\ol{x}^n-\widehat{x}),y^{n+1}-\widehat{y}\rangle.
\end{aligned}
\end{equation}
and, thus, by combining \eqref{mon-B} and \eqref{eq2}, it follows
\begin{equation}\label{ineq-y}\frac{1}{2\sigma}\|y^{n+1}-\widehat{y}\|^2\leq
-\frac{1}{2\sigma}\|y^{n+1}-y^n\|^2+\frac{1}{2\sigma}\|y^n-\widehat{y}\|^2+
\langle K(\ol{x}^n-\widehat{x}),y^{n+1}-\widehat{y}\rangle.
\end{equation}

Summing up the inequalities \eqref{ineq-x} and \eqref{ineq-y} and taking into account the definition of $\ol {x}^n$  we obtain for any $n \geq 0$
\begin{equation}\label{ineq-x-y-K}
\begin{aligned}
\frac{1}{2\tau}\|x^{n+1}-\widehat{x}\|^2 &+\frac{1}{2\sigma}\|y^{n+1}-\widehat{y}\|^2\leq\\
\frac{1}{2\tau}\|x^n-\widehat{x}\|^2+\frac{1}{2\sigma}\|y^n-\widehat{y}\|^2
&-\frac{1}{2\tau}\|x^{n+1}-x^n\|^2-\frac{1}{2\sigma}\|y^{n+1}-y^n\|^2+\\
\langle K(x^{n+1}+x^{n-1}&-2x^n),-y^{n+1}+\widehat{y}\rangle,
\end{aligned}
\end{equation}
where $x^{-1}:=x^0$.
Let us evaluate now the last term in relation \eqref{ineq-x-y-K}. For any $n \geq 0$ it holds
\begin{equation}\label{ev-K}
\begin{aligned}
\langle K(x^{n+1}&+x^{n-1}-2x^n),-y^{n+1}+\widehat{y}\rangle=\\
 \langle K(x^{n+1}-x^n),-y^{n+1}+\widehat{y}\rangle & +\langle K(x^n-x^{n-1}),y^n-\widehat{y}\rangle + \langle K(x^n-x^{n-1}),y^{n+1}-y^n\rangle \leq\\
-\langle K(x^{n+1}-x^n),y^{n+1}-\widehat{y}\rangle & +\langle K(x^n-x^{n-1}),y^n-\widehat{y}\rangle+\|K\|\|x^n-x^{n-1}\|\|y^{n+1}-y^n\| \leq\\
 -\langle K(x^{n+1}&-x^n),y^{n+1}-\widehat{y}\rangle +\langle K(x^n-x^{n-1}),y^n-\widehat{y}\rangle+\\
\frac{\sqrt{\sigma\tau}\|K\|}{2\tau} & \|x^n-x^{n-1}\|^2  +\frac{\sqrt{\sigma\tau}\|K\|}{2\sigma} \|y^{n+1}-y^n\|^2.
\end{aligned}
\end{equation}
From \eqref{ineq-x-y-K} and \eqref{ev-K} we obtain for any $n\geq 0$ the following estimation
$$\frac{1}{2\tau}\|x^{n+1}-\widehat{x}\|^2+\frac{1}{2\sigma}\|y^{n+1}-\widehat{y}\|^2\leq
\frac{1}{2\tau}\|x^n-\widehat{x}\|^2+\frac{1}{2\sigma}\|y^n-\widehat{y}\|^2-$$
$$\frac{1}{2\tau}\|x^{n+1}-x^n\|^2 -\frac{1}{2\sigma}\|y^{n+1}-y^n\|^2
-\langle K(x^{n+1}-x^n),y^{n+1}-\widehat{y}\rangle+\langle K(x^n-x^{n-1}),y^n-\widehat{y}\rangle+$$
$$\frac{\sqrt{\sigma\tau}\|K\|}{2\tau} \|x^n-x^{n-1}\|^2+\frac{\sqrt{\sigma\tau}\|K\|}{2\sigma}
\|y^{n+1}-y^n\|^2,$$
thus
\begin{equation}\label{ineq-x-y-K-de-sumat}
\begin{aligned}
\frac{\|x^{n+1}-\widehat{x}\|^2}{2\tau}+&\frac{\|y^{n+1}-\widehat{y}\|^2}{2\sigma} \leq\frac{\|x^n-\widehat{x}\|^2}{2\tau}+
\frac{\|y^n-\widehat{y}\|^2}{2\sigma}+\\
(-1+\sqrt{\sigma\tau}\|K\|) \frac{\|y^{n+1}-y^n\|^2}{2\sigma}&-\frac{\|x^{n+1}-x^n\|^2}{2\tau}+\sqrt{\sigma\tau}\|K\|\frac{\|x^n-x^{n-1}\|^2}{2\tau}\\
-\langle K(x^{n+1}-x^n),&y^{n+1}-\widehat{y}\rangle+\langle K(x^n-x^{n-1}),y^n-\widehat{y}\rangle.
\end{aligned}
\end{equation}
Let be an arbitrary $N\in\N$, $N\geq 2$. Summing up the inequalities in \eqref{ineq-x-y-K-de-sumat} from $n=0$ to $N-1$ we obtain
\begin{equation}\label{ineq-x-y-K-sumat1}
\begin{aligned}
\frac{\|x^N-\widehat{x}\|^2}{2\tau}+\frac{\|y^N-\widehat{y}\|^2}{2\sigma}&\leq\frac{\|x^0-\widehat{x}\|^2}{2\tau}+
\frac{\|y^0-\widehat{y}\|^2}{2\sigma}+\\
(-1+\sqrt{\sigma\tau}\|K\|) \sum_{n=1}^{N}\frac{\|y^n-y^{n-1}\|^2}{2\sigma}+(-1&+\sqrt{\sigma\tau}\|K\|) \sum_{n=1}^{N-1}\frac{\|x^n-x^{n-1}\|^2}{2\tau}-\\
\frac{\|x^N-x^{N-1}\|^2}{2\tau}&-\langle K(x^N-x^{N-1}),y^N-\widehat{y}\rangle.
\end{aligned}
\end{equation}
By combining \eqref{ineq-x-y-K-sumat1} with $$-\langle K(x^N-x^{N-1}),y^N-\widehat{y}\rangle\leq \frac{\|x^N-x^{N-1}\|^2}{2\tau}+\frac{\sigma\tau\|K\|^2}{2\sigma}\|y^N-\widehat{y}\|^2,$$ we get $$\frac{\|x^N-\widehat{x}\|^2}{2\tau}+\frac{\|y^N-\widehat{y}\|^2}{2\sigma}\leq\frac{\|x^0-\widehat{x}\|^2}{2\tau}+
\frac{\|y^0-\widehat{y}\|^2}{2\sigma}+$$$$(-1+\sqrt{\sigma\tau}\|K\|)
\sum_{n=1}^{N}\frac{\|y^n-y^{n-1}\|^2}{2\sigma}+(-1+\sqrt{\sigma\tau}\|K\|)
\sum_{n=1}^{N-1}\frac{\|x^n-x^{n-1}\|^2}{2\tau}+\frac{\sigma\tau\|K\|^2}{2\sigma}\|y^N-\widehat{y}\|^2$$
or, equivalently,
\begin{equation}\label{bound-seq-ser}
\begin{aligned}
\frac{\|x^N-\widehat{x}\|^2}{2\tau}&+(1-\sigma\tau\|K\|^2)\frac{\|y^N-\widehat{y}\|^2}{2\sigma}+\\
(1-\sqrt{\sigma\tau}\|K\|) \sum_{n=1}^{N}\frac{\|y^n-y^{n-1}\|^2}{2\sigma}&+(1-\sqrt{\sigma\tau}\|K\|)
\sum_{n=1}^{N-1}\frac{\|x^n-x^{n-1}\|^2}{2\tau}\leq\\
\frac{\|x^0-\widehat{x}\|^2}{2\tau}&+\frac{\|y^0-\widehat{y}\|^2}{2\sigma}.
\end{aligned}
\end{equation}
By taking into account that $\sigma\tau\|K\|^2<1$ \eqref{bound-seq-ser} yields \eqref{bound-seq}, hence $(x^n,y^n)_{n \geq 0}$ is bounded.

(ii) According to (i), $(x^n,y^n)_{n \geq 0}$ has a subsequence $(x^{n_k},y^{n_k})_{k \geq 0}$ which converges to an element $(x^*,y^*) \in X \times Y$ as $k\rightarrow+\infty$. From \eqref{gr-A} and \eqref{gr-B} and using that, due to the maximal monotonicity of $A$ and $B$,  $\gr A$ and $\gr B$ are closed sets, it follows that $(x^*,y^*)$ is a solution of the system of inclusions \eqref{syst-incl-2}. On the other hand, from \eqref{bound-seq-ser} we obtain that $\lim_{n\rightarrow+\infty}(x^n-x^{n-1})=\lim_{n\rightarrow+\infty}(y^n-y^{n-1})=0$.

Further, let be $k \geq 0$ and $N \in \N$, $N > n_k$. Summing up the inequalities in \eqref{ineq-x-y-K-de-sumat}, for $(\widehat{x},\widehat{y}):=(x^*,y^*)$, from $n=n_k$ to $N-1$ we obtain
$$\frac{\|x^N-x^*\|^2}{2\tau}+\frac{\|y^N-y^*\|^2}{2\sigma}+(1-\sqrt{\sigma\tau}\|K\|)
\sum_{n=n_k+1}^{N}\frac{\|y^n-y^{n-1}\|^2}{2\sigma}$$
$$-\frac{\|x^{n_k}-x^{n_k-1}\|^2}{2\tau}+(1-\sqrt{\sigma\tau}\|K\|)\sum_{n=n_k}^{N-1}\frac{\|x^n-x^{n-1}\|^2}{2\tau}+\frac{\|x^N-x^{N-1}\|^2}{2\tau}$$
$$+\langle K(x^N-x^{N-1}),y^N-y^*\rangle-\langle K(x^{n_k}-x^{n_k-1}),y^{n_k}-y^*\rangle$$
$$\leq \frac{\|x^{n_k}-x^*\|^2}{2\tau}+\frac{\|y^{n_k}-y^*\|^2}{2\sigma},$$
which yields
$$\frac{\|x^N-x^*\|^2}{2\tau}+\frac{\|y^N-y^*\|^2}{2\sigma} \leq \|K\|\|x^N-x^{N-1}\|\|y^N-y^*\| + $$
$$\frac{\|x^{n_k}-x^*\|^2}{2\tau}+\frac{\|y^{n_k}-y^*\|^2}{2\sigma} +  \frac{\|x^{n_k}-x^{n_k-1}\|^2}{2\tau} + \langle K(x^{n_k}-x^{n_k-1}),y^{n_k}-y^*\rangle.$$
Consequently, by using also the boundedness of $(x^n,y^n)_{n \geq 0}$, for any $k \geq 0$ it holds
$$\limsup_{N \rightarrow +\infty}\left(\frac{\|x^N-x^*\|^2}{2\tau}+\frac{\|y^N-y^*\|^2}{2\sigma} \right) \leq$$
$$\frac{\|x^{n_k}-x^*\|^2}{2\tau}+\frac{\|y^{n_k}-y^*\|^2}{2\sigma} +  \frac{\|x^{n_k}-x^{n_k-1}\|^2}{2\tau} + \langle K(x^{n_k}-x^{n_k-1}),y^{n_k}-y^*\rangle.$$
We finally let $k$ converge to $+\infty$, which yields
$$\limsup_{N \rightarrow +\infty}\left(\frac{\|x^N-x^*\|^2}{2\tau}+\frac{\|y^N-y^*\|^2}{2\sigma} \right) = 0$$
and, further, $\lim_{N\rightarrow+\infty}x^N=x^*$ and $\lim_{N\rightarrow+\infty}y^N=y^*$.
\end{proof}

\begin{remark}\label{arhur}
The characterization of the solution of the system of inclusions \eqref{syst-incl-2} given in \eqref{motiv} motivates the following iterative scheme

\noindent\begin{tabular}{rl}
\verb"Initialization": & \verb"Choose" $\sigma,\tau>0$ \verb"and" $(x^0,y^0)\in X\times Y$.\\
\verb"For" $n\geq 0$ \verb"set": & $y^{n+1}:=J_{\sigma B^{-1}}(y^n+\sigma Kx^n)$\\
& $x^{n+1}:=J_{\tau A}(x^n-\tau K^*y^{n+1})$
\end{tabular}

\noindent as well, which is nothing else than an Arrow-Hurwicz-Uzawa-type algorithm (see \cite{ahu}) designed for the inclusion problem \eqref{ecu1}.
\end{remark}

We close this section by discussing another modality of investigating the system of inclusions \eqref{syst-incl-2} by employing some ideas considered in  \cite{br-combettes, combettes-pesquet}. To this end we define the operators
$M:X\times Y\rightrightarrows X\times Y$, $M(x,y)=(Ax,B^{-1}y)$, and $S:X\times Y\rightarrow X\times Y$, $S(x,y)=(K^*y,-Kx)$. The operator $M$ is maximally monotone, since $A$ and $B$ are maximally monotone, while $S$ is maximally monotone, since it is a skew linear operator. Then $(\widehat{x},\widehat{y}) \in X \times Y$ is a solution of the system of inclusions \eqref{syst-incl-2} if and only if it solves the inclusion problem
\begin{equation}\label{M-S}
\mbox{find} \ (x,y) \in X \times Y \ \mbox{such that} \ (0,0)\in S(x,y) + M(x,y).
\end{equation}

Applying Algorithm \ref{alg-2-op} to the  problem \eqref{M-S} with starting point $(x^{0},y^{0}, u^{0},v^{0}) \in X \times Y \times X \times Y$, $(\ol {x}^0,\ol {y}^0)=(x^{0},y^{0})$ and $\sigma, \tau > 0$ gives rise for any $n \geq 0$ to the following iterations:
$$\begin{array}{lll}
(u^{n+1},v^{n+1}):=J_{\sigma M^{-1}}\big[(u^{n},v^{n})+\sigma (\ol{x}^n,\ol{y}^n)\big]\\
(x^{n+1},y^{n+1}):=J_{\tau S}\big[(x^{n},y^{n})-\tau (u^{n+1},v^{n+1})\big]\\
(\ol{x}^{n+1},\ol{y}^{n+1}):=2(x^{n+1},y^{n+1})-(x^{n},y^{n}).
\end{array}$$

Since
$$J_{\sigma M^{-1}}=J_{\sigma A^{-1}}\times J_{\sigma B}$$
and (see \cite[Proposition 2.7]{br-combettes})
$$J_{\tau S}(x,y)=\big((\id\nolimits_X+\tau^2 K^*K)^{-1}(x-\tau K^*y),(\id\nolimits_Y+\tau^2 KK^*)^{-1}(y+\tau Kx)\big) \ \forall (x,y) \in X \times Y,$$
this yields the following algorithm:

\begin{algorithm}\label{alg-2-op-combettes}$ $

\noindent\begin{tabular}{rl}
\verb"Initialization": & \verb"Choose" $\sigma,\tau>0$ \verb"such that" $\sigma\tau <1$ \verb"and" $(x^{0},y^{0}), (u^{0},v^{0}) \in X \times Y$.\\
& \verb"Set" $(\ol {x}^0,\ol {y}^0):=(x^{0},y^{0})$.\\
\verb"For" $n\geq 0$ \verb"set": & $u^{n+1}:=J_{\sigma A^{-1}}(u^{n}+\sigma\ol{x}^n)$\\
                                 & $v^{n+1}:=J_{\sigma B}(v^{n}+\sigma\ol{y}^n)$\\
                                 & $x^{n+1}:=(\id_X+\tau^2 K^*K)^{-1}\big[x^{n}-\tau u^{n+1}-\tau K^*(y^{n}-\tau v^{n+1})\big]$\\
                                 & $y^{n+1}:=(\id_Y+\tau^2 KK^*)^{-1}\big[y^{n}-\tau v^{n+1}+\tau K(x^{n}-\tau u^{n+1})\big]$\\
                                 & $\ol{x}^{n+1}:=2x^{n+1}-x^{n}$\\
                                 & $\ol{y}^{n+1}:=2y^{n+1}-y^{n}$
\end{tabular}

\end{algorithm}

The following convergence statement is a consequence of Theorem \ref{th-2-op}.

\begin{theorem} \label{th-2-op-comb} Assume that $X$ and $Y$ are finite-dimensional spaces and that the system of inclusions \eqref{syst-incl-2} is solvable. Then the sequence $(x^n,y^n)_{n \geq 0}$ generated in Algorithm \ref{alg-2-op-combettes} converges to $(x^*,y^*)$, a solution of the system of inclusions \eqref{syst-incl-2}, which yields that $x^*$ is a solution of the primal inclusion problem \eqref{ecu1} and $y^*$ is a solution of the dual inclusion problem \eqref{ecu2}.
\end{theorem}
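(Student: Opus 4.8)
The plan is to recognize Algorithm~\ref{alg-2-op-combettes} as being precisely Algorithm~\ref{alg-2-op} applied to the inclusion problem \eqref{M-S} in the product Hilbert space $X \times Y$, and then to transfer the conclusion of Theorem~\ref{th-2-op}. Comparing the iterations displayed just before Algorithm~\ref{alg-2-op-combettes} with those of Algorithm~\ref{alg-2-op}, the role of $A$ is taken by $S$, the role of $B$ is taken by $M$, and the role of the linear continuous operator $K$ is taken by the identity $\id\nolimits_{X \times Y}$; accordingly, the iterates $(x^n,y^n)$ and $(u^n,v^n)$ play the parts of the primal and the dual sequence, respectively, and the initialization $(\ol{x}^0,\ol{y}^0)=(x^0,y^0)$ matches $\ol{x}^0:=x^0$. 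Since $\|\id\nolimits_{X\times Y}\|=1$, the step size requirement $\sigma\tau\|K\|^2<1$ of Algorithm~\ref{alg-2-op} collapses to exactly the hypothesis $\sigma\tau<1$ assumed here.

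First I would check that the hypotheses of Theorem~\ref{th-2-op} hold for this instance. The operators $M$ and $S$ are maximally monotone, as already noted, and $X\times Y$ is finite-dimensional since $X$ and $Y$ are. It remains to confirm that the system of inclusions associated to \eqref{M-S} is solvable. For the data $(S,M,\id\nolimits_{X\times Y})$ this system reads $(x,y)\in M^{-1}(u,v)$ and $-(u,v)\in S(x,y)$, which, upon unfolding the definitions of $M$ and $S$, amounts to $Kx\in B^{-1}y$ and $-K^*y\in Ax$ together with $u=-K^*y$ and $v=Kx$; that is, it is equivalent to \eqref{syst-incl-2}. Hence the assumed solvability of \eqref{syst-incl-2} yields at once a solution of the system attached to \eqref{M-S}, and Theorem~\ref{th-2-op} becomes applicable.

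Next I would invoke Theorem~\ref{th-2-op}(ii) to conclude that the whole iterate $\big((x^n,y^n),(u^n,v^n)\big)$ converges, as $n\to+\infty$, to a point $\big((x^*,y^*),(u^*,v^*)\big)$ solving the system of inclusions associated to \eqref{M-S}. In particular $(x^n,y^n)\to(x^*,y^*)$, and by the equivalence established in the previous paragraph $(x^*,y^*)$ solves \eqref{syst-incl-2}. The concluding assertions then follow from the remarks recorded right after \eqref{syst-incl-2}: a solution of \eqref{syst-incl-2} furnishes a solution $x^*$ of the primal inclusion \eqref{ecu1} and a solution $y^*$ of the dual inclusion \eqref{ecu2}.

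The step I expect to be the most delicate is the bookkeeping in the first two paragraphs, namely making sure that the object actually driven to a solution by the product-space algorithm is exactly \eqref{syst-incl-2} instantiated with $(S,M,\id\nolimits_{X\times Y})$, and that its solvability is genuinely inherited from that of the original \eqref{syst-incl-2}. Once this identification is in place, the argument is a verbatim application of Theorem~\ref{th-2-op}, the only quantitative condition $\sigma\tau\|K\|^2<1$ having degenerated to $\sigma\tau<1$.
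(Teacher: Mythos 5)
Your proposal is correct and follows exactly the route the paper intends: the paper derives Algorithm \ref{alg-2-op-combettes} by applying Algorithm \ref{alg-2-op} to \eqref{M-S} with $(A,B,K)$ replaced by $(S,M,\id_{X\times Y})$ and then states Theorem \ref{th-2-op-comb} as an immediate consequence of Theorem \ref{th-2-op}. Your verification that the associated system of inclusions for $(S,M,\id_{X\times Y})$ is solvable precisely when \eqref{syst-incl-2} is (via $u=-K^*y$, $v=Kx$), and that $\sigma\tau\|\id\|^2<1$ reduces to $\sigma\tau<1$, supplies exactly the bookkeeping the paper leaves implicit.
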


\begin{remark}\label{noteasy} As we have already mentioned, the system of inclusions \eqref{syst-incl-2} is solvable if and only if the primal inclusion problem \eqref{ecu1} is solvable,  which is further equivalent to
solvability of the dual inclusion problem \eqref{ecu2}. Let us also notice that from the point of view of the numerical implementation Algorithm \ref{alg-2-op-combettes}  has the drawback to ask for the calculation of the inverses of $\id_X+\tau^2 K^*K$ and $\id_Y+\tau^2 KK^*$. This task can be in general very hard, but it becomes very simple when $K$ is, for instance, \textit{orthogonal}, like it happens for the linear transformations to which  \textit{orthogonal wavelets} give rise and which play an important role in signal processing.
\end{remark}

\section{Zeros of sums of compositions of monotone operators with linear continuous operators}\label{sec3}

In this section we provide via the primal-dual scheme Algorithm \ref{alg-2-op} an algorithm for solving the inclusion problem
\begin{equation}\label{sum-k-prim}
\mbox{find} \ x \in X \ \mbox{such that} \ 0\in\sum_{i=1}^{k}\omega_iK_i^*B_iK_ix,
\end{equation}
where $X$ and $Y_i$ are real Hilbert spaces, $B_i:Y_i\rightrightarrows Y_i$ are maximally monotone operators, $K_i:X\rightarrow Y_i$ are linear and continuous operators for $i=1,...,k$ and $\omega_i\in(0,1], i=1,...,k,$ are real numbers fulfilling $\sum_{i=1}^{k}\omega_i=1$. The dual inclusion problem of \eqref{sum-k-prim} reads
\begin{equation}\label{sum-k-dual}
\mbox{ find } y=(y_1,...,y_k) \in Y_1\times...\times Y_k \ \mbox{such that} \ \sum_{i=1}^{k}\omega_iK_i^*y_i=0 \ \mbox{and} \ \bigcap_{i=1}^k (B_iK_i)^{-1}(y_i) \neq \emptyset.
\end{equation}
Following the product space approach from \cite{br-combettes} (see also \cite{bauschke-book}) we show that this primal-dual pair can be reduced to a primal-dual pair of inclusion problems of the form \eqref{ecu1}-\eqref{ecu2}.

Consider the real Hilbert space $H:=X^k$  endowed with the inner product $\langle x,u\rangle_H=\sum_{i=1}^{k}\omega_i\langle x_i,u_i\rangle_X$ for $x=(x_i)_{1\leq i\leq k}, u=(u_i)_{1\leq i\leq k} \in H$, where $\langle \cdot,\cdot \rangle_X$ denotes the inner product on $X$. Further, let $Y:=Y_1\times...\times Y_k$ be the real Hilbert space endowed with the inner product $\langle y,z\rangle_Y:=\sum_{i=1}^{k}\omega_i\langle y_i,z_i\rangle_{Y_i}$
for $y=(y_i)_{1\leq i\leq k}, z=(z_i)_{1\leq i\leq k} \in Y$, where $\langle \cdot,\cdot \rangle_{Y_i}$ denotes the inner product on $Y_i, i=1,...,k$.
We define $A : H \rightrightarrows H$, $A:=N_V$, where $V=\{(x,...,x)\in H:x\in X\}$, $B:Y\rightrightarrows Y$, $B(y_1,...,y_k)=(B_1y_1,...,B_ky_k)$, and
$K:H\rightarrow Y$, $K(x_1,...,x_k)=(K_1x_1,...,K_kx_k)$. Obviously, the adjoint operator of $K$ is $K^*:Y\rightarrow H$, $K^*(y_1,...,y_k)=(K_1^*y_1,...,K_k^*y_k)$, for $(y_1,...,y_k) \in Y$. Further, let be $j:X\rightarrow H$, $j(x)=(x,...,x)$.

The operators $A$ and $B$ are maximally monotone and
$$x \mbox{ solves }\eqref{sum-k-prim} \mbox{ if and only if } (0,...,0) \in A(j(x))+K^*BK(j(x)),$$ while
$$y=(y_1,...,y_k) \mbox{ solves }\eqref{sum-k-dual} \mbox{ if and only if } (0,...,0)\in B^{-1}y-KA^{-1}(-K^*)y.$$

Applying Algorithm \ref{alg-2-op} to the inclusion problem
\begin{equation}\label{ecu3}
\mbox{find} \ (x_1,...,x_k) \in H \ \mbox{such that} \  0\in A(x_1,...,x_k) + K^*BK(x_1,...,x_k)
\end{equation}
with starting point $(x^{0},...,x^0,y_1^{0},...,y_k^0) \in \underbrace{X \times...\times X}_k \times Y_1 \times...\times Y_k$, constants $\sigma, \tau > 0$  and $(\ol {x}_1^0,...,\ol x_k^0)$ $:= (x^{0},...,x^0)$ yields for any $n \geq 0$ the following iterations:
$$\begin{array}{lll}
(y^{n+1}_i)_{1\leq i\leq k}:=J_{\sigma B^{-1}}\Big((y^{n}_i)_{1\leq i\leq k}+\sigma K (\ol{x}^{n}_i)_{1\leq i\leq k}\Big)\\
({x}^{n+1}_i)_{1\leq i\leq k}:=J_{\tau A}\Big(({x}^{n}_i)_{1\leq i\leq k}-\tau K^*(y^{n+1}_i)_{1\leq i\leq k}\Big)\\
(\ol{x}^{n+1}_i)_{1\leq i\leq k} :=2({x}^{n+1}_i)_{1\leq i\leq k} - ({x}^{n}_i)_{1\leq i\leq k}.
\end{array}$$
According to \cite{br-combettes}, for the occurring resolvents we have that $J_{\tau A}(u_1,...,u_k)=j(\sum_{i=1}^{k}\omega_iu_i)$ for $(u_1,...,u_k) \in H$ and $J_{\sigma B^{-1}}(z_1,...,z_k)=(J_{\sigma B_1^{-1}}z_1,...,J_{\sigma B_k^{-1}}z_k )$ for $(z_1,...,z_k) \in Y$. This means that for any $n \geq 1$ it holds $x^n_1 =...=x^n_k$ and $\ol x^{n+1}_1=...=\ol x^{n+1}_k$, which shows that there is no loss in the generality of the algorithm when assuming that the first $k$ components of the starting point coincide. Notice that a solution $(\widehat x_1,...,\widehat x_k)$ of \eqref{ecu3} must belong to $\dom A$, thus $\widehat x_1 = ...=\widehat x_k$. We obtain the following algorithm:

\begin{algorithm}\label{alg-k-op}$ $

\noindent\begin{tabular}{rl}
\verb"Initialization": & \verb"Choose" $\sigma,\tau>0$ \verb"such that" $\sigma\tau\sum_{i=1}^{k}\|K_i\|^2<1$ \verb"and"\\
                       & $(x^{0},y_1^{0},...,y_k^0) \in X \times Y_1 \times...\times Y_k$. \verb"Set" $\ol {x}^0:=x^0$.\\
\verb"For" $n\geq 0$ \verb"set": & $y^{n+1}_i:=J_{\sigma B_i^{-1}}(y^{n}_i+\sigma K_i\ol{x}^{n}), i=1,...,k$\\
                                 & $x^{n+1}:=x^n - \tau \sum_{i=1}^{k}\omega_i K_i^*y^{n+1}_i$\\
                                 & $\ol{x}^{n+1}:=2x^{n+1}-x^{n}$
\end{tabular}
\end{algorithm}

The convergence of Algorithm \ref{alg-k-op} is stated by the following result which is a consequence of Theorem \ref{th-2-op}.

\begin{theorem} \label{th-k-op} Assume that $X$ and $Y_i, i=1,...,k,$ are finite-dimensional spaces and \eqref{sum-k-prim} is solvable. Then \eqref{sum-k-dual} is also solvable and the sequences $(x^n)_{n \geq 0}$ and $(y^n_1,...,y^n_k)_{n \geq 0}$ generated in Algorithm \ref{alg-k-op} converge to a solution of the primal inclusion problem \eqref{sum-k-prim} and to a solution of the dual inclusion problem \eqref{sum-k-dual}, respectively.\end{theorem}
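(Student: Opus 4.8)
The plan is to obtain the statement as a direct corollary of Theorem \ref{th-2-op}, since Algorithm \ref{alg-k-op} is exactly Algorithm \ref{alg-2-op} applied to the product-space inclusion \eqref{ecu3}, which is an instance of the primal inclusion \eqref{ecu1} with $A=N_V$, $B$ and $K$ acting on the finite-dimensional Hilbert spaces $H$ and $Y$. So the whole task reduces to checking that the hypotheses of Theorem \ref{th-2-op}(ii) are met for \eqref{ecu3} and that the iterations of Algorithm \ref{alg-k-op} coincide with those of Algorithm \ref{alg-2-op} on \eqref{ecu3}.

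First I would settle the step-size condition. With the weighted norms on $H$ and $Y$ one computes $\|Kx\|_Y^2=\sum_{i=1}^k\omega_i\|K_ix_i\|_{Y_i}^2\le(\max_{1\le i\le k}\|K_i\|^2)\sum_{i=1}^k\omega_i\|x_i\|_X^2$, so that $\|K\|^2=\max_{1\le i\le k}\|K_i\|^2\le\sum_{i=1}^k\|K_i\|^2$. Hence the requirement $\sigma\tau\sum_{i=1}^k\|K_i\|^2<1$ imposed in Algorithm \ref{alg-k-op} is a (conservative) sufficient form of the hypothesis $\sigma\tau\|K\|^2<1$ of Theorem \ref{th-2-op}. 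As preparation I would also reconfirm the two resolvent identities already quoted: $J_{\sigma B^{-1}}$ decomposes coordinatewise into the $J_{\sigma B_i^{-1}}$ because $(\id+\sigma B^{-1})^{-1}$ is defined purely algebraically, while $J_{\tau A}=P_V$ by \eqref{projection}, and minimizing $\sum_{i=1}^k\omega_i\|u_i-x\|_X^2$ over $x$ gives $P_V(u)=j(\sum_{i=1}^k\omega_iu_i)$ — this is exactly where the weights $\omega_i$ enter, since $\sum_{i=1}^k\omega_i=1$.

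Next I would verify the reduction. Feeding Algorithm \ref{alg-2-op} a diagonal starting point and substituting the two resolvent identities, the $x$-iterates lie in $V$ for every $n\ge 1$ (any solution of \eqref{ecu3} must in any case lie in $\dom A=V$); writing $x^n$ for their common component turns the product iteration into the three lines of Algorithm \ref{alg-k-op}, with $x^{n+1}=x^n-\tau\sum_{i=1}^k\omega_iK_i^*y_i^{n+1}$. With this identification the conclusion follows: solvability of \eqref{sum-k-prim} yields, through the equivalence recorded before the theorem (that $x$ solves \eqref{sum-k-prim} if and only if $(0,\dots,0)\in A(j(x))+K^*BK(j(x))$), a solution of \eqref{ecu3}, hence — by the primal-to-system relationship between \eqref{ecu1} and \eqref{syst-incl-2} — a solution of the associated system \eqref{syst-incl-2}. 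Theorem \ref{th-2-op}(ii) then gives convergence of the product sequence to a solution of that system; its $x$-part lies in $V$ and its common component is the limit of $x^n$ and solves \eqref{sum-k-prim}, while its $y$-part is, by the dual equivalence, the limit of $(y_1^n,\dots,y_k^n)$ and solves \eqref{sum-k-dual} — whose existence is precisely the asserted solvability of the dual.

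I do not expect a real obstacle, because the product-space machinery, the resolvent formulas and the primal-dual equivalences have already been established. The only points demanding care are the operator-norm computation, together with the remark that the step-size bound stated in Algorithm \ref{alg-k-op} is merely sufficient rather than sharp, and the bookkeeping showing that the product iterates stay on the diagonal so that the compact Algorithm \ref{alg-k-op} faithfully reproduces Algorithm \ref{alg-2-op} applied to \eqref{ecu3}.
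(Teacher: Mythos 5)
Your proposal is correct and follows essentially the same route as the paper: the product-space reduction with the weighted inner products on $H$ and $Y$, the resolvent identities $J_{\tau A}=j\circ(\sum_i\omega_i\,\cdot)$ and the coordinatewise $J_{\sigma B^{-1}}$, the observation that the iterates stay on the diagonal $V$, and the invocation of Theorem \ref{th-2-op}(ii) together with the primal--dual equivalences; your sharper computation $\|K\|^2=\max_i\|K_i\|^2$ is consistent with (and refines) the bound $\|K\|^2\le\sum_i\|K_i\|^2$ used in Remark \ref{remsum}.
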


\begin{remark}\label{remsum}
Since $\|K\|^2 \leq \sum_{i=1}^{k}\|K_i\|^2$, the inequality $\sigma\tau\sum_{i=1}^{k}\|K_i\|^2<1$ in  Algorithm \ref{alg-k-op} is considered in order to ensure that $\sigma\tau\|K\|^2<1$.
\end{remark}

When particularizing the above framework to the case when  $Y_i=X$ and $K_i=\id_X$ for $i=1,..,k$, the primal-dual pair of inclusion problems \eqref{sum-k-prim}-\eqref{sum-k-dual} become
\begin{equation}\label{sum-k-prim-id}
\mbox{ find }x\in X \mbox{ such that } 0\in\sum_{i=1}^{k}\omega_iB_ix
\end{equation}
and
\begin{equation}\label{sum-k-dual-id}
\mbox{ find } y=(y_1,...,y_k) \in X \times...\times X \ \mbox{such that} \ \sum_{i=1}^{k}\omega_iy_i=0 \ \mbox{and} \ \bigcap_{i=1}^k B_i^{-1}(y_i) \neq \emptyset,\end{equation}
respectively. In this situation $H=Y$, $K=\id_H$, $\|K\|=1$ and
$$x \mbox{ solves } \eqref{sum-k-prim-id} \mbox{ if and only if } (0,...,0) \in A(j(x)) + B(j(x)),$$
while
$$y=(y_1,...,y_k) \mbox{ solves }\eqref{sum-k-dual-id} \mbox{ if and only if }  (0,...,0)\in B^{-1}(y)-A^{-1}(-y).$$

Algorithm \ref{alg-k-op} yields in this particular case the following iterative scheme:

\begin{algorithm}\label{alg-k-op-id1}$ $

\noindent\begin{tabular}{rl}
\verb"Initialization": & \verb"Choose" $\sigma,\tau>0$ \verb"such that" $\sigma\tau <1$ \verb"and"\\
                       & $(x^{0},y_1^{0},...,y_k^0) \in \underbrace{X \times...\times X}_{k+1}$. \verb"Set" $\ol {x}^0:=x^0$.\\
\verb"For" $n\geq 0$ \verb"set": & $y^{n+1}_i:=J_{\sigma B_i^{-1}}(y^{n}_i+\sigma\ol{x}^{n}), i=1,...,k$\\
                                 & $x^{n+1}:=x^n - \tau \sum_{i=1}^{k}\omega_iy^{n+1}_i$\\
                                 & $\ol{x}^{n+1}:=2x^{n+1}-x^{n}$
\end{tabular}

\end{algorithm}

The convergence of Algorithm \ref{alg-k-op-id1} follows via Theorem \ref{th-k-op}.

\begin{theorem} \label{th-k-op-id1} Assume that $X$ is a finite-dimensional space and \eqref{sum-k-prim-id} is solvable. Then \eqref{sum-k-dual-id} is also solvable and the sequences $(x^n)_{n \geq 0}$ and $(y^n_1,...,y^n_k)_{n \geq 0}$ generated in Algorithm \ref{alg-k-op-id1} converge to a solution of the primal inclusion problem \eqref{sum-k-prim-id} and to a solution of the dual inclusion problem \eqref{sum-k-dual-id}, respectively.\end{theorem}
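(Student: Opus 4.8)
The plan is to recognize Theorem \ref{th-k-op-id1} as the specialization of the product-space construction of Section \ref{sec3} to the case $Y_i = X$ and $K_i = \id\nolimits_X$ for $i = 1, \ldots, k$, and then to invoke Theorem \ref{th-2-op} directly rather than routing through the step-size bound of Theorem \ref{th-k-op}. First I would recall that under this specialization one has $H = Y = X^k$, $A = N_V$, $B(y_1, \ldots, y_k) = (B_1 y_1, \ldots, B_k y_k)$ and, crucially, $K = \id\nolimits_H$, as already noted in the text preceding Algorithm \ref{alg-k-op-id1}. Consequently $\|K\| = 1$ \emph{exactly}, so that the convergence condition $\sigma\tau\|K\|^2 < 1$ required by Theorem \ref{th-2-op} reduces precisely to the assumption $\sigma\tau < 1$ made in Algorithm \ref{alg-k-op-id1}.

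Next I would verify that the iterations of Algorithm \ref{alg-k-op-id1} are exactly those produced by Algorithm \ref{alg-2-op} applied to the product-space inclusion \eqref{ecu3} with $K_i = \id\nolimits_X$. Using the resolvent formulas $J_{\tau A}(u_1, \ldots, u_k) = j\big(\sum_{i=1}^k \omega_i u_i\big)$ and $J_{\sigma B^{-1}}(z_1, \ldots, z_k) = (J_{\sigma B_1^{-1}} z_1, \ldots, J_{\sigma B_k^{-1}} z_k)$ recalled in Section \ref{sec3}, together with $K\ol{x}^n = \ol{x}^n$ and $K^* y^{n+1} = y^{n+1}$, the dual update collapses to $y_i^{n+1} = J_{\sigma B_i^{-1}}(y_i^n + \sigma \ol{x}^n)$ and the primal update to $x^{n+1} = x^n - \tau\sum_{i=1}^k \omega_i y_i^{n+1}$, matching Algorithm \ref{alg-k-op-id1} verbatim. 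Since $X$ finite-dimensional forces $H$ and $Y$ to be finite-dimensional as well, Theorem \ref{th-2-op}(ii) applies and yields convergence of the generated sequence to a solution $(x^*, y^*)$ of the associated system \eqref{syst-incl-2} for the product-space data.

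Finally I would translate this back to the original pair via the equivalences displayed in Section \ref{sec3}: a solution of \eqref{syst-incl-2} for the product-space operators corresponds to a point whose primal component solves \eqref{sum-k-prim-id} and whose dual component solves \eqref{sum-k-dual-id}, which gives the asserted convergence of $(x^n)_{n\geq 0}$ and of $(y_1^n, \ldots, y_k^n)_{n\geq 0}$ separately. The solvability of the dual problem \eqref{sum-k-dual-id} follows from solvability of the primal problem \eqref{sum-k-prim-id} through the general primal-dual correspondence between \eqref{ecu1} and \eqref{ecu2} (cf. Remark \ref{noteasy}), which guarantees the existence of the solution $(\widehat x, \widehat y)$ of \eqref{syst-incl-2} needed to start the argument.

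I expect the only genuine subtlety — not a computational one — to be the step-size condition. A blind appeal to Theorem \ref{th-k-op} would carry along the crude estimate $\|K\|^2 \le \sum_{i=1}^k \|K_i\|^2 = k$ of Remark \ref{remsum} and thus demand the stronger $\sigma\tau k < 1$; the improvement to $\sigma\tau < 1$ hinges on using the exact value $\|K\| = 1$, which is available here precisely because $K = \id\nolimits_H$. Everything else is a direct transcription of Theorem \ref{th-2-op} through the product-space reduction of Section \ref{sec3}.
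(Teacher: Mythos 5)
Your proposal is correct and follows essentially the same route as the paper, which derives Algorithm \ref{alg-k-op-id1} by exactly this product-space specialization (noting $H=Y$, $K=\id\nolimits_H$, $\|K\|=1$ in the text preceding the algorithm) and then disposes of the convergence in one line by citing Theorem \ref{th-k-op}. Your observation that the step-size condition $\sigma\tau<1$ really rests on invoking Theorem \ref{th-2-op} with the exact value $\|K\|=1$, rather than the crude bound $\sum_{i=1}^k\|K_i\|^2=k$ underlying Theorem \ref{th-k-op} and Remark \ref{remsum}, is an accurate and worthwhile sharpening of the paper's terse justification.
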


In the last part of this section we provide a second algorithm which solves \eqref{sum-k-prim-id} and \eqref{sum-k-dual-id} which starts from the premise that by changing the roles of $A$ and $B$ one has
$$x \mbox{ solves } \eqref{sum-k-prim-id} \mbox{ if and only if } (0,...,0) \in B(j(x)) + A(j(x)),$$
while
$$y=(y_1,...,y_k) \mbox{ solves }\eqref{sum-k-dual-id} \mbox{ if and only if }  (0,...,0)\in A^{-1}(-y)-B^{-1}(y).$$
Applying Algorithm \ref{alg-2-op} to the inclusion problem
\begin{equation*}
\mbox{find} \ (x_1,...,x_k) \in H \ \mbox{such that} \  0\in B(x_1,...,x_k) + A(x_1,...,x_k)
\end{equation*}
with starting point $(x_1^{0},...,x_k^0, y_1^{0},...,y_k^0) \in \underbrace{X \times...\times X}_{2k}$, constants $\sigma, \tau > 0$ and $\!(\ol {x}_1^0,...,\ol x_k^0)\!\! :=$ $(x_1^{0},...,x_k^0)$ yields for any $n \geq 0$ the following iterations:
$$\begin{array}{lll}
(y^{n+1}_i)_{1\leq i\leq k}=J_{\sigma A^{-1}}\Big((y^{n}_i)_{1\leq i\leq k}+\sigma (\ol{x}^{n}_i)_{1\leq i\leq k}\Big)\\
(x^{n+1}_i)_{1\leq i\leq k}=J_{\tau B}\Big((x^{n}_i)_{1\leq i\leq k}-\tau (y^{n+1}_i)_{1\leq i\leq k}\Big)\\
(\ol{x}^{n+1}_i)_{1\leq i\leq k}=2(x^{n+1}_i)_{1\leq i\leq k}-(x^{n}_i)_{1\leq i\leq k}.
\end{array}$$
Noticing that $J_{\sigma A^{-1}}=J_{\sigma N_V^{-1}}=\id_H-\sigma J_{\sigma^{-1}N_V}\circ\sigma^{-1}\id_H$ (cf. \cite[Proposition 23.18]{bauschke-book}) and $J_{\sigma^{-1}N_V}(u_1,...,u_k)=J_{N_V}(u_1,...,u_k)=j(\sum_{i=1}^{k}\omega_iu_i)$ for $(u_1,...,u_k) \in H$ (cf. \cite[relation (3.27)]{br-combettes}) and by making for any $n \geq 0$ the change of variables $y^n_i:=-y^n_i$ for $i=1,...,k$, we obtain the following iterative scheme:

\begin{algorithm}\label{alg-k-op-id}$ $

\noindent\begin{tabular}{rl}
\verb"Initialization": & \verb"Choose" $\sigma,\tau>0$ \verb"such that" $\sigma\tau < 1$ \verb"and"\\
                       & $(x^{0}_1,...,x^0_k,y_1^{0},...,y_k^0) \in \underbrace{X \times...\times X}_{2k}$. \verb"Set" $(\ol {x}_1^0,...,\ol x_k^0) := (x_1^{0},...,x_k^0)$.\\
\verb"For" $n\geq 0$ \verb"set": & $y^{n+1}_i:=y^{n}_i-\sigma\ol{x}^{n}_i+\sum_{j=1}^{k}\omega_j y^{n}_j+\sigma \sum_{j=1}^{k}\omega_j \ol{x}^{n}_j, i=1,...,k$\\
                                 & $x^{n+1}_i:=J_{\tau B_i}(x^{n}_i+\tau y^{n+1}_i), i=1,...,k$\\
                                 & $\ol{x}^{n+1}_i:=2x^{n+1}_i-x^{n}_i, i=1,...,k$
\end{tabular}

\end{algorithm}

\begin{theorem} \label{th-k-op-id} Assume that $X$ is finite dimensional and \eqref{sum-k-prim-id} is solvable. Then \eqref{sum-k-dual-id} is also solvable and for all $i=1,...,k$ the sequence $(x^{n}_i)_{n \geq 0}$ generated in Algorithm \ref{alg-k-op-id} converges to a solution of \eqref{sum-k-prim-id} and the sequence $(y^{n}_1,...,y^n_k)_{n \geq 0}$ generated by the same algorithm converges to a solution of \eqref{sum-k-dual-id}.
\end{theorem}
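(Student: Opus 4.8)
The plan is to realize Algorithm \ref{alg-k-op-id} as the instance of Algorithm \ref{alg-2-op} obtained by taking $B$ (the product of the $B_i$) as the first maximally monotone operator, $A=N_V$ as the second, and $\id_H$ as the linear operator, and then to read off the conclusion from Theorem \ref{th-2-op}(ii). Since the linear operator is $\id_H$ with norm $1$, the stepsize condition $\sigma\tau\|K\|^2<1$ collapses to $\sigma\tau<1$, which is exactly the requirement imposed in Algorithm \ref{alg-k-op-id}; thus, once solvability is in place, the hypotheses of Theorem \ref{th-2-op} hold.

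First I would confirm that the generic iterations displayed before the algorithm reduce to its update rules. The update of $(x^{n+1}_i)_i$ needs only $J_{\tau B}=J_{\tau B_1}\times\cdots\times J_{\tau B_k}$, which is componentwise and immediate. The update of $(y^{n+1}_i)_i$ needs $J_{\sigma A^{-1}}$; here I would apply the decomposition \eqref{j-inv-op} to the operator $A^{-1}$ with parameter $\sigma$ to obtain $J_{\sigma A^{-1}}=\id_H-\sigma J_{\sigma^{-1}N_V}\circ\sigma^{-1}\id_H$, then use \eqref{projection} together with $J_{N_V}(u_1,\dots,u_k)=j(\sum_{i=1}^k\omega_i u_i)$ for the projection onto the diagonal $V$ in the weighted inner product. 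Carrying out the sign substitution $y^n_i:=-y^n_i$ then turns the resulting expression into the first line of Algorithm \ref{alg-k-op-id}. This is careful but routine bookkeeping.

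Next I would interpret the system \eqref{syst-incl-2} for this instance. With the linear operator equal to $\id_H$ it reads $x\in N_V^{-1}(y)$ and $-y\in B(x)$. The first inclusion forces $x\in V$, i.e. $x_1=\cdots=x_k=:\widehat x$, together with $y\in V^\perp$, i.e. $\sum_{i=1}^k\omega_i y_i=0$; the second gives $-y_i\in B_i(\widehat x)$ for every $i$. Forming the weighted sum yields $0\in\sum_{i=1}^k\omega_i B_i(\widehat x)$, so $\widehat x$ solves \eqref{sum-k-prim-id}, and conversely any primal solution produces such a pair. Writing $z_i:=-y_i$ --- the sign change already absorbed into the algorithm --- gives $\widehat x\in B_i^{-1}(z_i)$ for all $i$ and $\sum_{i=1}^k\omega_i z_i=0$, so $(z_1,\dots,z_k)$ solves \eqref{sum-k-dual-id}. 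Hence solvability of \eqref{sum-k-prim-id} is equivalent to solvability of \eqref{syst-incl-2} for this instance, and this equivalence also delivers the solvability of \eqref{sum-k-dual-id}.

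Finally I would invoke Theorem \ref{th-2-op}(ii): as $X$ is finite-dimensional, so are $H=X^k$ and $Y=X^k$, and therefore $(x^n,y^n)_{n\ge0}$ converges to a solution $(x^*,y^*)$ of \eqref{syst-incl-2}. By the characterization above $x^*\in V$, so its components all equal a single $\widehat x$ solving \eqref{sum-k-prim-id}; since $x^n\to x^*$ in $H$, each $(x^n_i)_{n\ge0}$ converges to this primal solution, even though the iterates themselves need not lie on $V$. Likewise $(y^*_1,\dots,y^*_k)$, expressed in the sign-changed variables, solves \eqref{sum-k-dual-id}. I expect the only delicate point to be the resolvent reduction in the second paragraph --- in particular keeping track of the swapped roles of $A$ and $B$ and of the effect of the sign substitution --- since all of the genuine convergence work is already carried out in Theorem \ref{th-2-op}.
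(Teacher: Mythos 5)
Your proposal is correct and follows exactly the paper's route: the paper itself gives no separate proof of Theorem \ref{th-k-op-id}, deriving Algorithm \ref{alg-k-op-id} from Algorithm \ref{alg-2-op} with the roles of $A=N_V$ and $B$ swapped, $K=\id_H$, the resolvent identities \eqref{j-inv-op} and $J_{N_V}(u_1,\dots,u_k)=j(\sum_i\omega_iu_i)$, and the sign change $y^n_i:=-y^n_i$, and then reading the convergence off Theorem \ref{th-2-op}(ii). Your additional observations (that the stepsize condition collapses to $\sigma\tau<1$, that solvability of \eqref{sum-k-prim-id} is equivalent to solvability of the corresponding system \eqref{syst-incl-2}, and that the limit $x^*$ lies in $V$ even though the iterates need not) are exactly the points left implicit in the paper.
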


\section{Solving convex optimization problems via the primal-dual algorithm}\label{sec4}

The aim of this section is to employ the iterative methods investigated above for solving several classes of unconstrained convex optimization problems. To this end we consider first for the real Hilbert spaces $X$ and $Y$ the proper, convex and lower semicontinuous functions $f:X\rightarrow \overline{\R}$ and $g:Y \rightarrow \overline{\R}$ and the linear and continuous operator $K:X\rightarrow Y$ the optimization problem
\begin{equation}\label{opt-pr-2}
\inf_{x\in X}\{f(x)+g(Kx)\}
\end{equation}
along with its \textit{Fenchel dual} problem (see \cite{bauschke-book, b-hab, EkTem, Zal-carte})
\begin{equation}\label{opt-dual-2}
\sup_{y\in Y}\{-f^*(-K^*y)-g^*(y)\}.
\end{equation}
For this primal-dual pair \textit{weak duality} always holds, i.e., the optimal objective value of the primal problem is greater than or equal to the optimal objective value of the dual problem. In order to guarantee \textit{strong duality}, i.e., the situation when the optimal objective values of the two problems coincide and the dual problem has an optimal solution one needs to ask for the fulfillment of a so-called \textit{qualification condition}. Some of the most popular interiority-type qualification conditions are (see, for instance, \cite{b-hab, Zal-carte, bot-csetnek, simons, bauschke-book, EkTem}):
\begin{center}
\begin{tabular}{r|l}
$(QC_1)$ \ & \  $\exists x'\in \dom f \cap K^{-1}(\dom g)$ such that $g$ is continuous at $Kx'$,
\end{tabular}
\end{center}
\begin{center}
\begin{tabular}{r|l}
$(QC_2)$ \ & \  $0\in\inte(\dom g-K(\dom f))$
\end{tabular}
\end{center}
and
\begin{center}
\begin{tabular}{r|l}
$(QC_3)$ \ & \ $0\in\sqri(\dom g-K(\dom f))$.
\end{tabular}
\end{center}
We notice that $(QC_1)\Rightarrow(QC_2)\Rightarrow(QC_3)$, these implications being in general strict, and refer the reader to the works cited above and the references therein for other qualification conditions in convex optimization.

Algorithm \ref{alg-2-op} written for $A:=\partial f$ and $B:=\partial g$ yields the following iterative scheme:

\begin{algorithm}\label{alg-2-functions}$ $

\noindent\begin{tabular}{rl}
\verb"Initialization": & \verb"Choose" $\sigma,\tau>0$ \verb"such that" $\sigma\tau\|K\|^2<1$ \verb"and" $(x^0,y^0)\in X\times Y$.\\
& \verb"Set" $\ol{x}^0:=x^0$.\\
\verb"For" $n\geq 0$ \verb"set": & $y^{n+1}:=\prox_{\sigma g^*}(y^n+\sigma K\ol{x}^n)$\\
& $x^{n+1}:=\prox_{\tau f}(x^n-\tau K^*y^{n+1})$\\
& $\ol{x}^{n+1}:=2x^{n+1}-x^n$
\end{tabular}

\end{algorithm}

We have the following convergence result.

\begin{theorem}\label{th-2-functions} Assume that the primal problem \eqref{opt-pr-2} has an optimal solution $\widehat{x}$ and one of the qualification conditions $(QC_i)$, $i = 1,2,3$, is fulfilled. Let $(x^n,\ol{x}^n,y^n)_{n \geq 0}$ be the sequence generated by Algorithm \ref{alg-2-functions}. The following statements are true:

(i) There exists $\widehat{y} \in Y$, an optimal solution of the dual problem \eqref{opt-dual-2}, the optimal objective values of the two optimization problems coincide and $(\widehat{x},\widehat{y})$ is a solution of the system of inclusions
\begin{equation}\label{syst-incl-2-f}
Kx\in \partial g^*(y) \ \mbox{and} \ -K^*y\in \partial f(x).
\end{equation}

(ii) For any $n \geq 0$ it holds
\begin{equation}\label{bound-seq-f} \frac{\|x^n-\widehat{x}\|^2}{2\tau}+(1-\sigma\tau\|K\|^2)\frac{\|y^n-\widehat{y}\|^2}{2\sigma}\leq
\frac{\|x^0-\widehat{x}\|^2}{2\tau}+\frac{\|y^0-\widehat{y}\|^2}{2\sigma},
\end{equation}
thus the sequence $(x^n,y^n)_{n \geq 0}$ is bounded.

(iii) If $X$ and $Y$ are finite-dimensional, then $(x^n)_{n \geq 0}$ converges to an optimal solution of \eqref{opt-pr-2} and $(y^n)_{n \geq 0}$ converges to an optimal solution of \eqref{opt-dual-2}.
\end{theorem}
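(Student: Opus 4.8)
The plan is to recognize Algorithm \ref{alg-2-functions} as the specialization of Algorithm \ref{alg-2-op} to the maximally monotone operators $A:=\partial f$ and $B:=\partial g$, and then to reduce statements (ii) and (iii) to Theorem \ref{th-2-op} once (i) has been settled. First I would record that $\partial f$ and $\partial g$ are maximally monotone, being subdifferentials of proper, convex and lower semicontinuous functions, and that, by $J_{\tau\partial f}=\prox_{\tau f}$ together with $(\partial g)^{-1}=\partial g^*$ and $J_{\sigma\partial g^*}=\prox_{\sigma g^*}$, the resolvent steps of Algorithm \ref{alg-2-op} become precisely the proximal steps of Algorithm \ref{alg-2-functions}. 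Under this identification the system \eqref{syst-incl-2-f} is nothing but \eqref{syst-incl-2} written for $A=\partial f$, $B=\partial g$, so that $B^{-1}=\partial g^*$. Hence the whole theorem hinges on producing a solution of \eqref{syst-incl-2-f}, which is the content of (i).

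For (i) I would invoke Fenchel strong duality. Since each $(QC_i)$ implies the weakest condition $(QC_3)$, the qualification hypothesis guarantees that the dual \eqref{opt-dual-2} admits an optimal solution $\widehat y$ and that the optimal objective values of \eqref{opt-pr-2} and \eqref{opt-dual-2} coincide. Writing the zero duality gap as $f(\widehat x)+g(K\widehat x)+f^*(-K^*\widehat y)+g^*(\widehat y)=0$ and using $\langle K^*\widehat y,\widehat x\rangle=\langle \widehat y,K\widehat x\rangle$, I would split this equality into the two Fenchel--Young expressions $\big(f(\widehat x)+f^*(-K^*\widehat y)+\langle K^*\widehat y,\widehat x\rangle\big)$ and $\big(g(K\widehat x)+g^*(\widehat y)-\langle \widehat y,K\widehat x\rangle\big)$, each of which is nonnegative and whose sum vanishes; therefore both vanish. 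Equality in Fenchel--Young then yields $-K^*\widehat y\in\partial f(\widehat x)$ and $\widehat y\in\partial g(K\widehat x)$, the latter being equivalent to $K\widehat x\in\partial g^*(\widehat y)$, which is exactly \eqref{syst-incl-2-f}.

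With (i) in hand, (ii) and (iii) follow directly from Theorem \ref{th-2-op} applied to $A=\partial f$ and $B=\partial g$: the solution $(\widehat x,\widehat y)$ of \eqref{syst-incl-2-f} supplies the solvability hypothesis of that theorem, part (i) of which gives the estimate \eqref{bound-seq-f} and the boundedness claimed in (ii), while part (ii) of that theorem gives, in finite dimension, convergence of $(x^n,y^n)_{n\geq 0}$ to some $(x^*,y^*)$ solving \eqref{syst-incl-2-f}. To finish (iii) I would run the Fenchel--Young argument of the previous paragraph in reverse: from $-K^*y^*\in\partial f(x^*)$ and $y^*\in\partial g(Kx^*)$ the two Fenchel--Young inequalities hold with equality, so the duality gap at $(x^*,y^*)$ is zero, and weak duality then forces $x^*$ to be optimal for \eqref{opt-pr-2} and $y^*$ optimal for \eqref{opt-dual-2}. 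The only genuinely new work is the duality step in (i); establishing the qualification-condition-based strong duality and extracting the optimality system is the main obstacle, everything else being a transcription of Theorem \ref{th-2-op}.
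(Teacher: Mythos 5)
Your proposal is correct and follows exactly the route the paper intends: the paper itself disposes of this theorem in Remark \ref{remfunctions}(i), declaring statement (i) to be the standard Fenchel optimality system under the qualification conditions (citing the literature) and deriving (ii)--(iii) from Theorem \ref{th-2-op} with $A=\partial f$, $B=\partial g$. You have merely written out the Fenchel--Young splitting of the zero duality gap and its converse, which the paper leaves to references; the details are sound.
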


\begin{remark}\label{remfunctions}
(i) Statement (i) of the above theorem is well-known in the literature,  \eqref{syst-incl-2-f} being nothing else than the system of optimality conditions for the primal-dual pair \eqref{opt-pr-2}-\eqref{opt-dual-2} (see, for instance, \cite{b-hab, EkTem, Zal-carte}), while the other two statements follow  from Theorem \ref{th-2-op}.

(ii) The existence of optimal solutions of the primal problem \eqref{opt-pr-2} is guaranteed if, for instance, $f$ is coercive and $g$ is bounded below. Indeed, under these circumstances, the objective function of \eqref{opt-pr-2} is coercive and the statement follows via \cite[Theorem 2.5.1(ii)]{Zal-carte} (see, also, \cite[Proposition 15.7]{bauschke-book}). On the other hand, when $f$ is strongly convex, then $f + g \circ K$ is strongly convex, too, thus \eqref{opt-pr-2} has an unique optimal solution (cf. \cite[Corollary 11.16]{bauschke-book}).

(iii) We rediscovered above the iterative scheme and the convergence statement from \cite{ch-pck} as a particular instance of the general results furnished in Section \ref{sec2}.
\end{remark}

For $X$ and $Y_i$  real Hilbert spaces, $g_i:Y_i\rightarrow \overline \R$ proper, convex and lower semicontinuous functions, $K_i:X\rightarrow Y_i$ linear and continuous operators and $\omega_i\in(0,1], i=1,...,k,$ real numbers fulfilling $\sum_{i=1}^{k}\omega_i=1$ consider the optimization problem
\begin{equation}\label{sum-k-prim-f}
\inf_{x\in X}\sum_{i=1}^{k}\omega_ig_i(K_ix)
\end{equation}
and its \textit{Fenchel-type dual} problem
\begin{equation}\label{sum-k-dual-f}
\sup_{\substack{y_i\in Y_i, i=1,...,k\\ \sum_{i=1}^{k}\omega_iK_i^*y_i=0}} \sum_{i=1}^{k}-\omega_ig_i^*(y_i).
\end{equation}
For the primal-dual pair \eqref{sum-k-prim-f}-\eqref{sum-k-dual-f} strong duality holds whenever one of the following qualification conditions is fulfilled (see, for instance, \cite{b-hab, Zal-carte, br-combettes}):
\begin{center}
\begin{tabular}{r|l}
$(QC^\Sigma_1)$ \ & \  $\exists x' \in \bigcap_{i=1}^k K_i^{-1}(\dom g_i)$ such that $g_i$ is continuous at $K_ix', i=1,...,k$,
\end{tabular}
\end{center}
\begin{center}
\begin{tabular}{r|l}
$(QC^\Sigma_2)$ \ & \  $0 \in \inte \Big(\prod_{i=1}^{k}\dom g_i-\{(K_1x,...,K_kx):x\in X\}\Big)$
\end{tabular}
\end{center}
and
\begin{center}
\begin{tabular}{r|l}
$(QC^\Sigma_3)$ \ & \ $0\in\sqri\Big(\prod_{i=1}^{k}\dom g_i-\{(K_1x,...,K_kx):x\in X\}\Big)$.
\end{tabular}
\end{center}
Again, $(QC^\Sigma_1)\Rightarrow(QC^\Sigma_2)\Rightarrow(QC^\Sigma_3)$, the implications being in general strict. By taking $B_i : =\partial g_i$, $i=1,...,k$, Algorithm \ref{alg-k-op} yields the following iterative scheme:

\begin{algorithm}\label{alg-k-f}$ $

\noindent\begin{tabular}{rl}
\verb"Initialization": & \verb"Choose" $\sigma,\tau>0$ \verb"such that" $\sigma\tau\sum_{i=1}^{k}\|K_i\|^2<1$ \verb"and"\\
                       & $(x^{0},y_1^{0},...,y_k^0) \in X \times Y_1 \times...\times Y_k$. \verb"Set" $\ol {x}^0:=x^0$.\\
\verb"For" $n\geq 0$ \verb"set": & $y^{n+1}_i:=\prox_{\sigma g_i^*}(y^{n}_i+\sigma K_i\ol{x}^{n}), i=1,...,k$\\
                                 & $x^{n+1}:=x^n - \tau \sum_{i=1}^{k}\omega_i K_i^*y^{n+1}_i$\\
                                 & $\ol{x}^{n+1}:=2x^{n+1}-x^{n}$
\end{tabular}
\end{algorithm}

The convergence of Algorithm \ref{alg-k-f} is stated by the following result which is a consequence of Theorem \ref{th-k-op}.

\begin{theorem}\label{th-k-functions} Assume that the primal problem \eqref{sum-k-prim-f} has an optimal solution $\widehat{x}$ and one of the qualification conditions $(QC^\Sigma_i)$, $i = 1,2,3$, is fulfilled.  The following statements are true:

(i) There exists $(\widehat{y}_1,...,\widehat y_k) \in Y_1 \times ... \times Y_k$, an optimal solution of the dual problem \eqref{sum-k-dual-f}, the optimal objective values of the two optimization problems coincide and $(\widehat{x},\widehat{y}_1,...,\widehat y_k)$ is a solution of the system of inclusions
\begin{equation}\label{syst-incl-k-f}
K_ix\in \partial g_i^*(y_i), i=1,...,k, \ \mbox{and} \ \sum_{i=1}^{k}\omega_iK_i^*y_i=0.
\end{equation}

(ii) If $X$ and $Y$ are finite-dimensional, then the sequences $(x^n)_{n \geq 0}$ and $(y^n_1,...,y^n_k)_{n \geq 0}$ generated in Algorithm \ref{alg-k-f} converge to an optimal solution of \eqref{sum-k-prim-f} and \eqref{sum-k-dual-f}, respectively.
\end{theorem}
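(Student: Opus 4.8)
The plan is to recognize Algorithm \ref{alg-k-f} as the specialization of Algorithm \ref{alg-k-op} obtained by taking $B_i := \partial g_i$, $i=1,\dots,k$, and then to invoke Theorem \ref{th-k-op}. First I would record that, since each $g_i$ is proper, convex and lower semicontinuous, the operator $B_i = \partial g_i$ is maximally monotone, so the standing hypotheses of Section \ref{sec3} are fulfilled. Using $(\partial g_i)^{-1} = \partial g_i^*$ together with $J_{\sigma \partial g_i^*} = \prox_{\sigma g_i^*}$, the resolvent step $y_i^{n+1} = J_{\sigma B_i^{-1}}(y_i^n + \sigma K_i \ol{x}^n)$ of Algorithm \ref{alg-k-op} turns into $y_i^{n+1} = \prox_{\sigma g_i^*}(y_i^n + \sigma K_i \ol{x}^n)$, so the two iterative schemes literally coincide.

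For statement (i) I would appeal to classical Fenchel--Rockafellar duality, exactly as in the case $k=2$ recorded in Remark \ref{remfunctions}(i). Under any of $(QC^\Sigma_i)$, $i=1,2,3$, strong duality holds for the pair \eqref{sum-k-prim-f}--\eqref{sum-k-dual-f} and the dual problem admits an optimal solution. More precisely, the qualification condition validates the subdifferential sum-and-chain rule $\partial\big(\sum_{i=1}^k \omega_i g_i \circ K_i\big)(\widehat x) = \sum_{i=1}^k \omega_i K_i^* \partial g_i(K_i \widehat x)$, so the optimality $0 \in \partial\big(\sum_{i=1}^k \omega_i g_i \circ K_i\big)(\widehat x)$ yields elements $\widehat y_i \in \partial g_i(K_i \widehat x)$ with $\sum_{i=1}^k \omega_i K_i^* \widehat y_i = 0$. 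Since $y_i \in \partial g_i(K_i x) \Leftrightarrow K_i x \in \partial g_i^*(y_i)$, this is precisely the system \eqref{syst-incl-k-f}. Weighting the Young--Fenchel equalities $g_i(K_i\widehat x) + g_i^*(\widehat y_i) = \langle \widehat y_i, K_i \widehat x\rangle$ by $\omega_i$, summing, and using $\sum_{i=1}^k \omega_i K_i^* \widehat y_i = 0$ then confirms that the two optimal values coincide, so $(\widehat y_1,\dots,\widehat y_k)$ is dual optimal.

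For statement (ii) I would transfer the conclusion of Theorem \ref{th-k-op}. The optimality system \eqref{syst-incl-k-f} established in (i) shows that the primal inclusion \eqref{sum-k-prim} associated to $B_i = \partial g_i$ is solvable; since $X$ and the $Y_i$ are finite-dimensional, Theorem \ref{th-k-op} applies and provides a solution $x^*$ of \eqref{sum-k-prim} as the limit of $(x^n)_{n\geq0}$ and a solution $(y_1^*,\dots,y_k^*)$ of \eqref{sum-k-dual} as the limit of $(y_1^n,\dots,y_k^n)_{n\geq0}$. It remains to identify these inclusion solutions with optimizers. Because the inclusion $\sum_{i=1}^k \omega_i K_i^* \partial g_i(K_i x) \subseteq \partial\big(\sum_{i=1}^k \omega_i g_i \circ K_i\big)(x)$ holds unconditionally, $0 \in \sum_{i=1}^k \omega_i K_i^* \partial g_i(K_i x^*)$ forces $0 \in \partial\big(\sum_{i=1}^k \omega_i g_i \circ K_i\big)(x^*)$, i.e. $x^*$ solves \eqref{sum-k-prim-f}; and a solution $(y_1^*,\dots,y_k^*)$ of \eqref{sum-k-dual}, being dual feasible and of the form $y_i^* \in \partial g_i(K_i x)$ for a common $x$, is dual optimal by the same Young--Fenchel argument as in (i).

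The main obstacle I expect is purely the bookkeeping of the dictionary between the inclusion and the optimization formulations: everything hinges on the exact subdifferential sum-and-chain rule, which is the one implication that genuinely consumes the qualification condition, whereas its reverse inclusion (needed to read a primal optimizer off an inclusion solution) is automatic. Once this correspondence is in place, (i) is classical duality and (ii) is a direct citation of Theorem \ref{th-k-op}, so no new estimates are required.
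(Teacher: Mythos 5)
Your proposal is correct and follows exactly the route the paper intends: the authors give no explicit proof, stating only that the theorem is a consequence of Theorem \ref{th-k-op} (with part (i) being classical Fenchel duality, as in Remark \ref{remfunctions}(i) for the case $k=2$), and your argument fills in precisely that translation, including the correct observation that the qualification condition is consumed only by the exact sum-and-chain rule while the reverse inclusion needed to identify limits of the inclusion iterates with optimizers is automatic.
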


Considering, finally, the particular case when $Y_i=X$ and $K_i=\id_X$, $i=1,...,k$, the problems \eqref{sum-k-prim-f}
and \eqref{sum-k-dual-f} become
\begin{equation}\label{sum-k-prim-f-id} \inf_{x\in X}\sum_{i=1}^{k}\omega_ig_i(x) \end{equation}
and, respectively,
\begin{equation}\label{sum-k-dual-f-id}
\sup_{\substack{y_i\in X, i=1,...,k\\
\sum_{i=1}^{k}\omega_iy_i=0}}\sum_{i=1}^{k}-\omega_ig_i^*(y_i).
\end{equation}
The qualification conditions $(QC^\Sigma_i)$, $i = 1,2,3$, looks in this case like:
\begin{center}
\begin{tabular}{r|l}
$(QC^{id}_1)$ \ & \  $\exists x' \in \bigcap_{i=1}^k \dom g_i$ such that $g_i$ is continuous at $x', i=1,...,k$,
\end{tabular}
\end{center}
\begin{center}
\begin{tabular}{r|l}
$(QC^{id}_2)$ \ & \  $0 \in \inte \Big(\prod_{i=1}^{k}\dom g_i-\{(x,...,x):x\in X\}\Big)$
\end{tabular}
\end{center}
and, respectively,
\begin{center}
\begin{tabular}{r|l}
$(QC^{id}_3)$ \ & \ $0\in\sqri\Big(\prod_{i=1}^{k}\dom g_i-\{(x,...,x):x\in X\}\Big)$.
\end{tabular}
\end{center}
By particularizing Algorithm \ref{alg-k-f} we obtain:

\begin{algorithm}\label{alg-k-f-id1}$ $

\noindent\begin{tabular}{rl}
\verb"Initialization": & \verb"Choose" $\sigma,\tau>0$ \verb"such that" $\sigma\tau < 1$ \verb"and"\\
                       & $(x^{0},y_1^{0},...,y_k^0) \in \underbrace{X \times ...\times X}_{k+1}$. \verb"Set" $\ol {x}^0:=x^0$.\\
\verb"For" $n\geq 0$ \verb"set": & $y^{n+1}_i:=\prox_{\sigma g_i^*}(y^{n}_i+\sigma \ol{x}^{n}), i=1,...,k$\\
                                 & $x^{n+1}:=x^n - \tau \sum_{i=1}^{k}\omega_iy^{n+1}_i$\\
                                 & $\ol{x}^{n+1}:=2x^{n+1}-x^{n}$
\end{tabular}
\end{algorithm}
while Algorithm \ref{alg-k-op-id} gives rise to the following iterative scheme:

\begin{algorithm}\label{alg-k-f-id}$ $

\noindent\begin{tabular}{rl}
\verb"Initialization": & \verb"Choose" $\sigma,\tau>0$ \verb"such that" $\sigma\tau < 1$ \verb"and"\\
                       & $(x^{0}_1,...,x^0_k,y_1^{0},...,y_k^0) \in \underbrace{X \times...\times X}_{2k}$. \verb"Set" $(\ol {x}_1^0,...,\ol x_k^0) := (x_1^{0},...,x_k^0)$.\\
\verb"For" $n\geq 0$ \verb"set": & $y^{n+1}_i:=y^{n}_i-\sigma\ol{x}^{n}_i+\sum_{j=1}^{k}\omega_j y^{n}_j+\sigma \sum_{j=1}^{k}\omega_j \ol{x}^{n}_j, i=1,...,k$\\
                                 & $x^{n+1}_i:=\prox_{\tau g_i}(x^{n}_i+\tau y^{n+1}_i), i=1,...,k$\\
                                 & $\ol{x}^{n+1}_i:=2x^{n+1}_i-x^{n}_i, i=1,...,k$
\end{tabular}

\end{algorithm}
We have the following convergence theorem.

\begin{theorem}\label{th-k-f-id} Assume that the primal problem \eqref{sum-k-prim-f-id} has an optimal solution $\widehat{x}$ and one of the qualification conditions $(QC^{id}_i)$, $i = 1,2,3$, is fulfilled.  The following statements are true:

(i) There exists $(\widehat{y}_1,...,\widehat y_k) \in X \times ... \times X$, an optimal solution of the dual problem \eqref{sum-k-dual-f-id}, the optimal objective values of the two optimization problems coincide and $(\widehat{x},\widehat{y}_1,...,\widehat y_k)$ is a solution of the system of inclusions
\begin{equation}\label{syst-incl-k-f-id}
x\in \partial g_i^*(y_i), i=1,...,k, \ \mbox{and} \ \sum_{i=1}^{k}\omega_iy_i=0.
\end{equation}

(ii) If $X$ is finite-dimensional, then the sequences $(x^n)_{n \geq 0}$ and $(y^n_1,...,y^n_k)_{n \geq 0}$ generated in Algorithm \ref{alg-k-f-id1} converge to an optimal solution of  \eqref{sum-k-prim-f-id} and \eqref{sum-k-dual-f-id}, respectively.

(iii) If $X$ is finite-dimensional, then for all $i=1,...,k$ the sequence $(x^{n}_i)_{n \geq 0}$ generated in Algorithm \ref{alg-k-f-id} converges to an optimal solution of \eqref{sum-k-prim-f-id} and the sequence $(y^{n}_1,...y^n_k)_{n \geq 0}$ generated by the same algorithm converges to an optimal solution of \eqref{sum-k-dual-f-id}.
\end{theorem}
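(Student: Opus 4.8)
The plan is to derive all three assertions as specializations of the inclusion-theoretic results of Section \ref{sec3}, by setting $B_i:=\partial g_i$ for $i=1,\dots,k$. Since each $g_i$ is proper, convex and lower semicontinuous, every $\partial g_i$ is maximally monotone with $(\partial g_i)^{-1}=\partial g_i^*$, so that $J_{\sigma\partial g_i^*}=\prox_{\sigma g_i^*}$ and $J_{\tau\partial g_i}=\prox_{\tau g_i}$. Under this substitution the primal-dual inclusion pair \eqref{sum-k-prim-id}-\eqref{sum-k-dual-id} becomes the optimization pair \eqref{sum-k-prim-f-id}-\eqref{sum-k-dual-f-id}, Algorithm \ref{alg-k-f-id1} coincides with Algorithm \ref{alg-k-op-id1}, and Algorithm \ref{alg-k-f-id} coincides with Algorithm \ref{alg-k-op-id}; I note that the $y$-updates in the latter two schemes are already in closed form and do not involve $B_i$, so only the resolvent step $J_{\tau B_i}$ turns into the proximal step $\prox_{\tau g_i}$.

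For part (i) I would specialize Theorem \ref{th-k-functions}(i) to $Y_i=X$ and $K_i=\id_X$: then $(QC^\Sigma_i)$ reduces to $(QC^{id}_i)$ and the optimality system \eqref{syst-incl-k-f} reduces to \eqref{syst-incl-k-f-id}, yielding strong duality, the existence of a dual optimal $(\widehat y_1,\dots,\widehat y_k)$, and the validity of \eqref{syst-incl-k-f-id} for $(\widehat x,\widehat y_1,\dots,\widehat y_k)$. The real content here, inherited from Theorem \ref{th-k-functions}(i), is the subdifferential sum rule $\partial\big(\sum_{i=1}^k\omega_ig_i\big)(\widehat x)=\sum_{i=1}^k\omega_i\partial g_i(\widehat x)$, valid under $(QC^{id}_i)$; it converts the primal optimality $0\in\partial\big(\sum_{i=1}^k\omega_ig_i\big)(\widehat x)$ into the coupled system \eqref{syst-incl-k-f-id} and, in particular, shows that $\widehat x$ solves the inclusion problem \eqref{sum-k-prim-id} for $B_i=\partial g_i$.

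For parts (ii) and (iii) I would combine this solvability of \eqref{sum-k-prim-id} with the finite-dimensionality of $X$ and invoke the convergence results for the inclusion schemes: part (ii) follows from Theorem \ref{th-k-op-id1} (the step-size constraint $\sigma\tau<1$ is literally the same), and part (iii) from Theorem \ref{th-k-op-id}. What remains is to transport the limits back to the optimization problems, i.e.\ to check that under $(QC^{id}_i)$ the solutions of \eqref{sum-k-prim-id}-\eqref{sum-k-dual-id} are exactly the optimal solutions of \eqref{sum-k-prim-f-id}-\eqref{sum-k-dual-f-id}. For the primal this is again the sum rule; for the dual it is the standard argument that, given primal solvability and strong duality, the Fenchel-Young inequalities $g_i(\widehat x)+g_i^*(y_i)\ge\langle y_i,\widehat x\rangle$ must hold with equality for every $i$ (using $\omega_i>0$ and $\sum_{i=1}^k\omega_iy_i=0$), which is equivalent to $\widehat x\in\partial g_i^*(y_i)$ and hence to $\bigcap_{i=1}^k\partial g_i^*(y_i)\neq\emptyset$. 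This last set-identification is the only delicate point; the convergence of the iterates is entirely inherited from the inclusion theorems.
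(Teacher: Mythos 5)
Your proposal is correct and follows exactly the route the paper intends: the paper states Theorem \ref{th-k-f-id} without a written proof, as the specialization $B_i:=\partial g_i$ of Theorems \ref{th-k-op-id1} and \ref{th-k-op-id}, with part (i) being the standard Fenchel duality/optimality-condition statement (cf.\ Remark \ref{remfunctions}(i)), and your identification of the solution sets of \eqref{sum-k-prim-id}--\eqref{sum-k-dual-id} with the optimal solutions of \eqref{sum-k-prim-f-id}--\eqref{sum-k-dual-f-id} via the sum rule and the Fenchel--Young equalities is precisely the argument needed to make this rigorous.
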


\begin{remark}\label{reg-cond-k-f-id} One can notice that Theorem \ref{th-k-f-id} remains valid even under a weaker condition than in $(QC^{id}_1)$, namely by assuming that there exists $x' \in \cap_{i=1}^{k}\dom g_i$ such that $k-1$  of the functions $g_i, i=1,...,k,$ are continuous at $x'$ (see \cite[Remark 2.5]{b-hab}).
\end{remark}

\section{Numerical experiments}\label{sec5}

In this section we present  numerical experiments involving the primal-dual algorithm and some of its variants when solving some nondifferentiable convex optimization problems originating in image processing and in location theory.

\subsection{Image deblurring and denoising}\label{subsec51}

For a given matrix $A \in \mathbb{R}^{m \times m}$ describing a \textit{blur operator} and a given vector $b \in \R^m$ representing the \textit{blurred and noisy image} the task that we considered was to estimate the \textit{unknown original image} $x^*\in\R^m$ fulfilling
$$Ax=b.$$
With this respect we dealt with the regularized least squares problems
\begin{align*}
	\hspace{-1.8cm}(P_2) \quad \quad \inf_{x \in \R^m}{\left\{\lambda \left\| x \right\|_1 +  \left\| Ax-b \right\|^2\right\}}
\end{align*}
and
\begin{align*}
	\hspace{-1.8cm}(P_3) \quad \quad \inf_{x \in \R^m}{\left\{\lambda \left\| x \right\|_1  +  \left\| Ax-b \right\|^2 + \delta_S(x)\right\}},
\end{align*}
where $S\subseteq \R^m$ is an $m$-dimensional cube representing the range of the pixels and $\lambda > 0$ the regularization parameter. One of our aims was to show that in some concrete cases the quality of the recovered image via classical $l_1$ regularized problem is by far not as good as the one recovered when regularizing with $\lambda\|\cdot\|_1 + \delta_S$. We solved problem $(P_2)$ by using Algorithm \ref{alg-2-functions} and problem $(P_3)$ by using Algorithm \ref{alg-k-f} and showed the benefits of having the first one extended to problems having in their objective the sum of more than two functions.

We concretely looked at the $272 \times 329$  \textit{blobs test image}, which is part of the image processing toolbox in Matlab. We scaled the pixels to the interval $[0,1]$ and vectorized the image, obtaining a vector of dimension $m=272 \times 329=89488$. Further, by making use of the Matlab functions {\ttfamily imfilter} and {\ttfamily fspecial}, we blurred the image as follows:
\begin{lstlisting}[numbers=left,numberstyle=\tiny,frame=tlrb,showstringspaces=false]
H=fspecial('gaussian',9,4);   % gaussian blur of size 9 times 9
                              % and standard deviation 4
B=imfilter(X,H,'conv','symmetric');  % B=observed blurred image
                                     % X=original image		
\end{lstlisting}
In row $1$ the function {\ttfamily fspecial} returns a rotationally symmetric Gaussian lowpass filter of size $9 \times 9$ with standard deviation $4$. The entries of $H$ are nonnegative and their sum adds up to $1$. In row $3$ the function {\ttfamily imfilter} convolves the filter $H$  with the image $X$ and outputs the blurred image $B$. The boundary option "symmetric" corresponds to reflexive boundary conditions.

Thanks to the rotationally symmetric filter $H$, the linear operator $A\in\mathbb{R}^{m \times m}$ given by the Matlab function {\ttfamily imfilter} is symmetric, too. By making use of the real spectral decomposition of $A$ it shows that $\left\| A \right\|^2=1$. After adding a zero-mean white Gaussian noise with standard deviation $10^{-3}$, we obtained the blurred and noisy image $b \in \mathbb{R}^m$ which is shown in Figure \ref{fig:blobs}.
\begin{figure}[ht]
	\centering
	\includegraphics*[viewport= 91 322 541 500, width=0.9\textwidth]{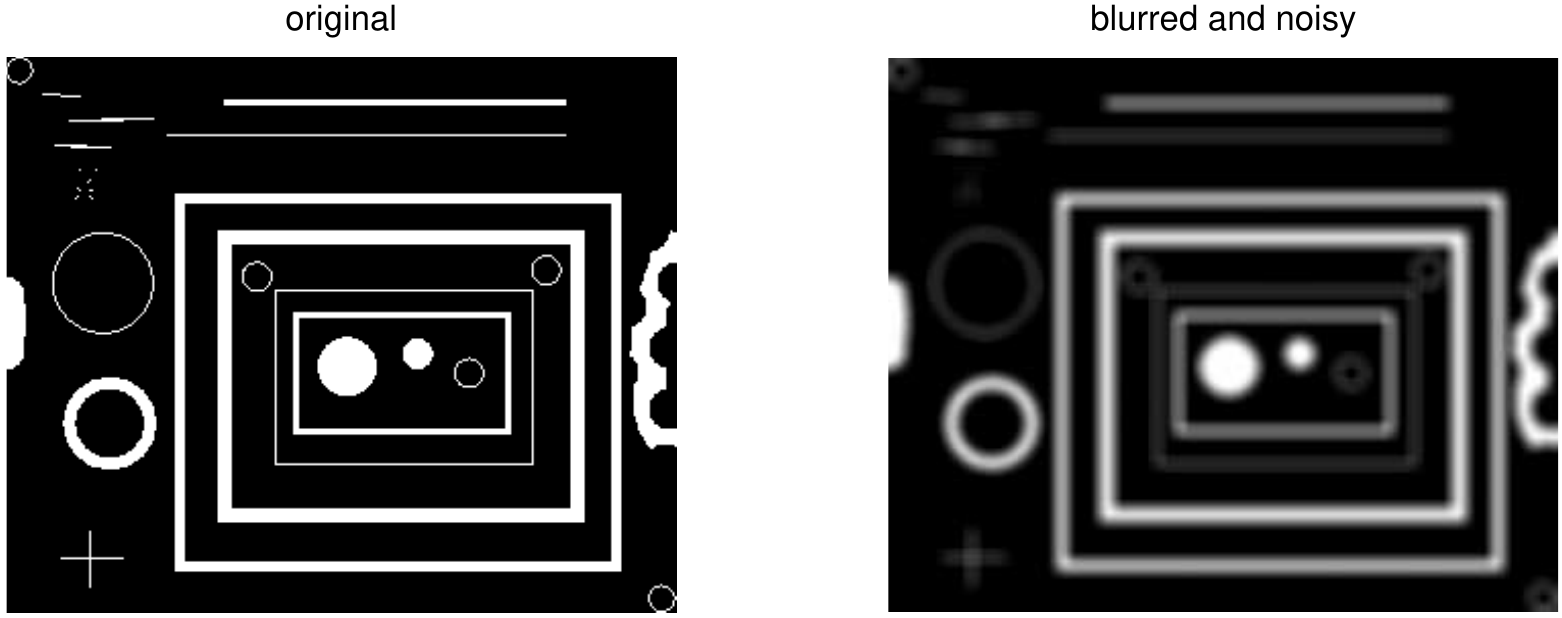}
	\caption{The $272 \times 329$ blobs test image}
	\label{fig:blobs}
\end{figure}
We solved the problem $(P_2)$ by applying Algorithm \ref{alg-2-functions} for $\lambda=2e-6$, $\sigma=0.01$, $\tau=9.99$, $f: \R^m \rightarrow \R, f(x) = \lambda\|x\|_1$, $g:\R^m \rightarrow \R$, $g(y) = \|y-b\|^2$ and $K=A$. Since $g^*(y) = \tfrac{1}{4}\|y\|^2 + \langle y, b\rangle$ for $y \in \R^m$, for all $z \in \R^m$ it holds
$$\prox\nolimits_{\sigma g^*}(z) = \argmin_{y \in \R^m} \left\{ \tfrac{\sigma}{4}\|y\|^2 + \sigma \langle y, b\rangle + \tfrac{1}{2}\|z-y\|^2\right \} = \tfrac{2}{\sigma + 2}(z-\sigma b),$$
while
$$\prox\nolimits_{\tau f}(z)_i = \argmin_{y_i \in \R} \left\{\tau\lambda |y|_i  + \tfrac{1}{2} |z_i-y_i|^2\right \} = \max \{|z_i|-\tau \lambda,0\}\sn(z_i) \ \forall i=1,...,m.$$
We solved the problem $(P_3)$ by applying Algorithm \ref{alg-k-f} for $k=3$, $\omega_i=\tfrac{1}{3}$, $i=1,2,3$, $\lambda=2e-6$, $\sigma=0.05$, $\tau=6.66$, $g_1: \R^m \rightarrow \R, g_1(x) = \lambda\|x\|_1$, $K_1 = \id_{\R^m}$, $g_2:\R^m \rightarrow \R$, $g_2(y) = \|y-b\|^2$, $K_2=A$ and $g_3 : \R^m \rightarrow \overline \R, g_3(x) = \delta_S(x),  K_3= \id_{\R^m}$. For all $z \in \R^m$ it holds
$$\prox\nolimits_{\sigma g_1^*}(z)_i = z_i - \sigma \prox\nolimits_{\tfrac{\lambda}{\sigma}\|\cdot\|_1}\left(\tfrac{1}{\sigma}z\right)_i = z_i - \max \{|z_i|-\lambda,0\}\sn(z_i) \ \forall i=1,...,m,$$
$$\prox\nolimits_{\sigma g_2^*}(z) = \tfrac{2}{\sigma + 2}(z-\sigma b)$$
and
$$\prox\nolimits_{\sigma g_3^*}(z) = z - \sigma \prox\nolimits_{\sigma\delta_S}\left(\tfrac{1}{\sigma}z\right) = z - \sigma P_S\left(\tfrac{1}{\sigma}z\right).$$
\begin{figure}[ht]	
	\centering
	\includegraphics*[viewport= 80 488 521 785, width=1\textwidth]{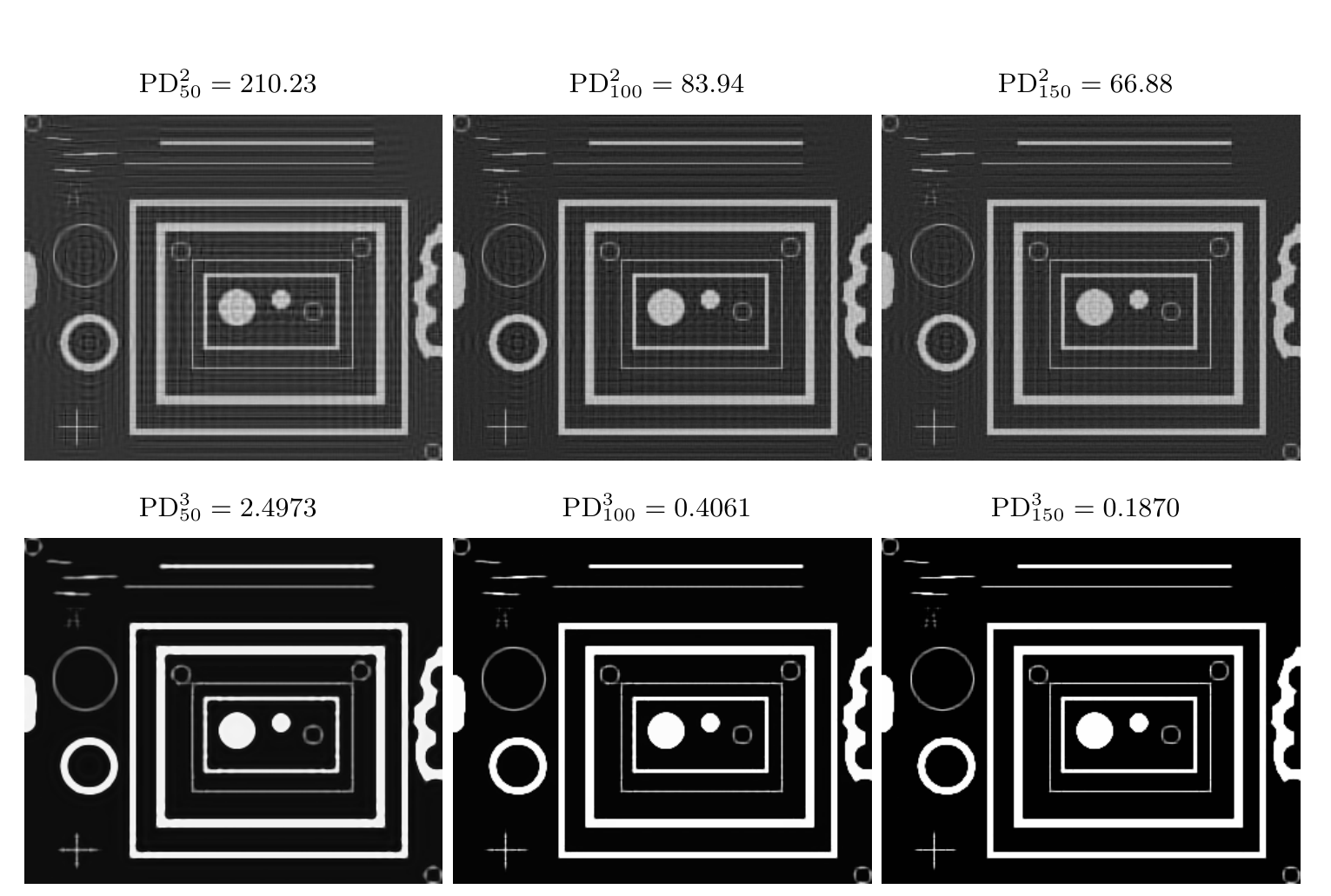}
	\caption{Iterations 50, 100 and 150 for solving $(P_2)$ via Algorithm \ref{alg-2-functions} and $(P_3)$ via Algorithm \ref{alg-k-f}}
	\label{fig:comparison}	
\end{figure}

The top line of Figure \ref{fig:comparison} shows the iterations 50, 100 and 150 of Algorithm \ref{alg-2-functions} for solving $(P_2)$, while the bottom line of it shows the iterations 50, 100 and 150 of Algorithm \ref{alg-k-f} for solving $(P_3)$, for each of them the value of the objective function at the respective iterate being provided. All in all the quality of the recovered image by solving $(P_3)$ significantly outperformed the one of the image recovered by solving the classical $l_1$ regularized least squares problem $(P_2)$. Moreover, in the images recovered by solving $(P_2)$ some artefacts could be identified. The gap between the quality of the recovered images is emphasized also by the \textit{improvement in signal-to-noise ratio (ISNR)}, which is defined as
$$ \text{ISNR}(n) = 10 \log_{10}\left( \frac{\left\|x-b\right\|^2}{\left\|x-x^n\right\|^2} \right),$$
where $x$, $b$ and $x^n$ denote the original, observed and estimated image at iteration $n$, respectively. Figure \ref{fig:ISNR} shows the evolution of the ISNR values when solving $(P_2)$ and $(P_3)$.
\begin{figure}[ht]	
	\centering
	\includegraphics*[viewport= 110 355 490 563, width=0.9\textwidth]{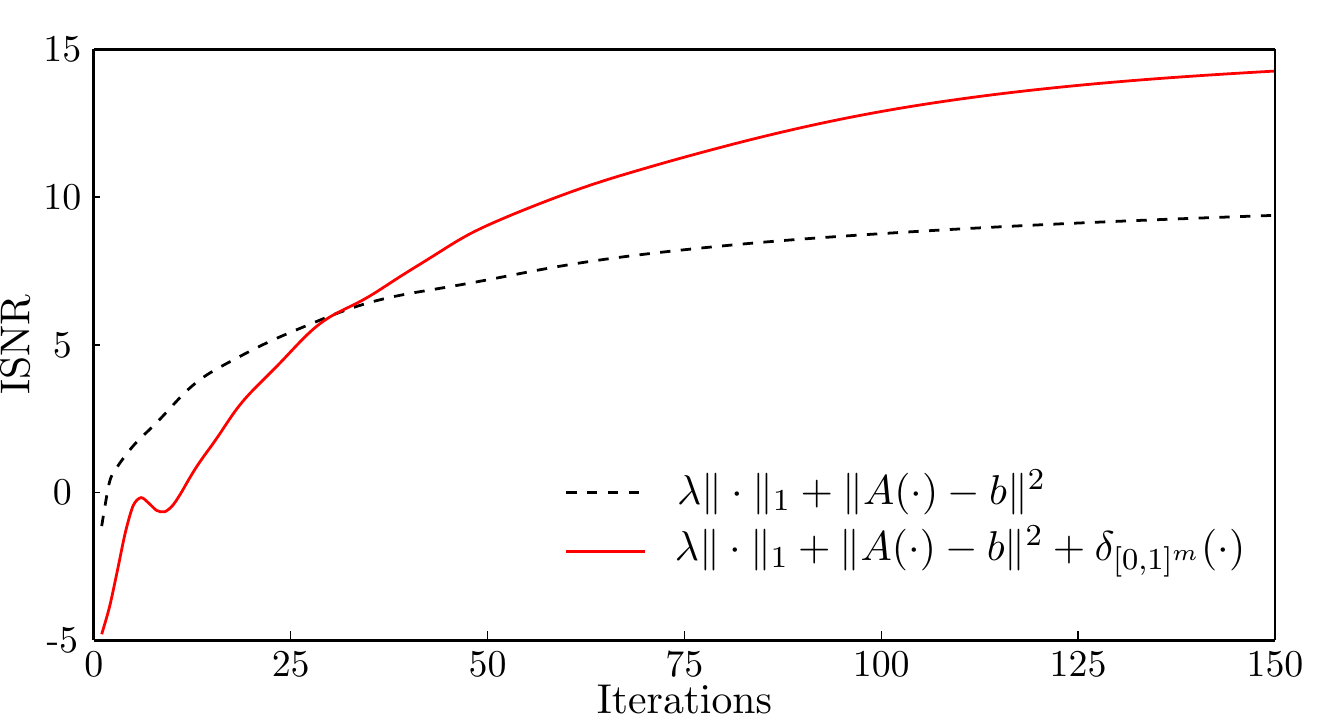}
	\caption{Improvement in signal-to-noise ratio (ISNR)}
	\label{fig:ISNR}	
\end{figure}

\subsection{The Fermat-Weber problem}\label{subsec52}

The second application of the primal-dual algorithm presented in this paper is with respect to the solving of the Fermat-Weber problem, which concerns the
finding of a new facility in order to minimize the sum of weighted distances to a set of fixed points. We considered the nondifferentiable convex optimization problem
\begin{align*}
	\hspace{-1.8cm}(P_{FW}) \quad \quad \inf_{x \in \R^m}\left \{\sum_{i=1}^k \lambda_i\|x-c_i\|\right\},
\end{align*}
where $c_i \in \R^m$ are given points and $\lambda_i > 0$ are given weights for $i=1,...,k$. We solved the optimization problem $(P_{FW})$ by using Algorithm
\ref{alg-k-f-id1} for $\omega_i = \tfrac{1}{k}$ and $g_i : \R^m \rightarrow \R, g_i(x) = \lambda_i\|x-c_i\|, i=1,...,k$. With this respect we used that for $i=1,...,k$ it holds
$$g_i^*(y) = \left \{
\begin{array}{rl}
\langle y,c_i \rangle, & \ \mbox{if} \ \|y\| \leq \lambda_i,\\
+\infty, & \ \mbox{otherwise},
\end{array}
\right. \ \forall y \in \R^m$$
and, from here, when $\sigma > 0$, that
$$\prox\nolimits_{\sigma g_i^*}(z) =  \left \{
\begin{array}{rl}
z-\sigma c_i, & \ \mbox{if} \ \|z-\sigma c_i\| \leq \lambda_i,\\
\lambda_i \frac{z-\sigma c_i}{\|z- \sigma c_i\|}, & \ \mbox{otherwise}
\end{array}
\right. \ \forall z \in \R^m.$$
We investigated the functionality of the algorithm on two prominent sets of points and weights, often considered in the literature when analyzing the performances of iterative schemes for the Fermat-Weber problem.

In a first instance we considered for $k=4$ the points in the plane and the weights
\begin{equation}\label{exampleFW1}
c_1=(59,0), c_2=(20,0), c_3=(-20,48), c_4=(-20,-48) \ \mbox{and} \ \lambda_1=\lambda_2=5, \lambda_3 = \lambda_4 = 13,
\end{equation}
respectively.  The optimal location point is $\widehat x = (0,0)$, however, the classical Weiszfeld algorithm (see \cite{lmw, weiszfeld}) with starting point $x^0=(44,0)$ breaks down in $(20,0)$. On the other hand, Algorithm \ref{alg-k-f-id1} with $\sigma=0.13$, $\tau =1.4$ and $y^0_k=(0,0), k=1,...,4$, achieved a point which is optimal up to three decimal points after $30$ iterations. Figure \ref{fig:bilderlocation1} shows the progression of the iterations, while $PD_n$ provides the value of the objective function at iteration $n$.
\begin{figure}[ht]	
	\centering
	\includegraphics*[viewport= 107 283 491 572, width=1\textwidth]{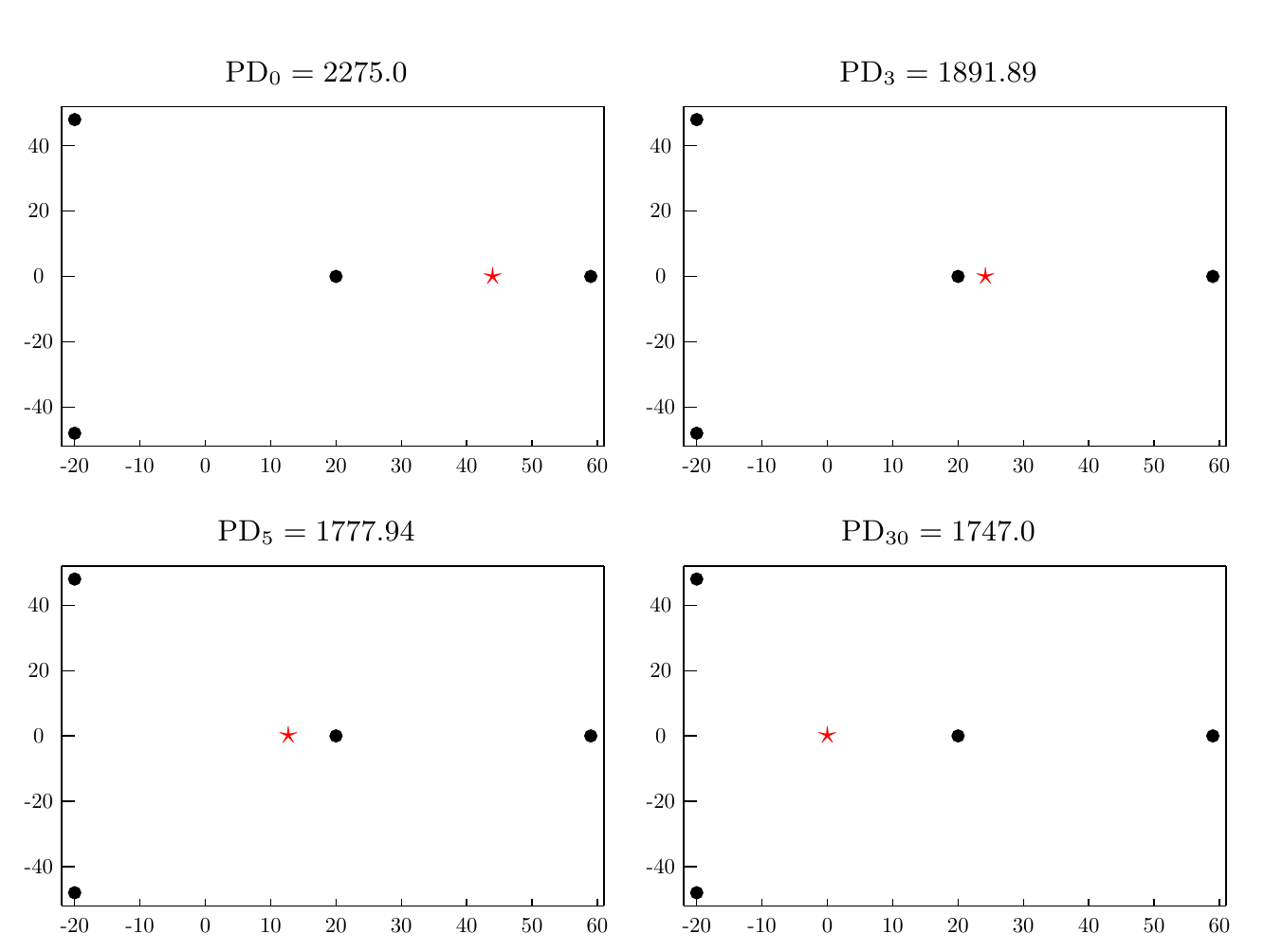}
	\caption{The progression of iterations of Algorithm \ref{alg-k-f-id1} when solving the Fermat-Weber problem for points and weights given by \eqref{exampleFW1}}
	\label{fig:bilderlocation1}	
\end{figure}
Recently an approach for solving the Fermat-Weber problem was proposed by Goldfarb and Ma in \cite{golma}, which assumes the approximation of each of the functions in the objective by a convex and differentiable function with Lipschitz-continuous gradient. The optimization problem which this smoothing method yields is solved in \cite{golma} by the classical gradient method (Grad) and by a variant of Nesterov's accelerated gradient method (Nest) (see \cite{nes}) and by a fast multiple-splitting algorithm (FaMSA-s) introduced in this paper. We applied the smoothing approach in connection with these algorithms to the example considered in \eqref{exampleFW1} with smoothness parameter $\rho$ equal to $10^{-3}$ (chosen also in \cite{golma}) and step sizes $\tau=0.1$, $\tau=0.01$ and $\tau=0.001$. We stopped the three algorithms when achieving an iterate $x^n$ such that $\|x^n - \widehat x\| \leq 10^{-3}$ and obtained in all cases the lowest number of iterations for $\tau=0.1$. A point which is optimal up to three decimal points was obtained for Nest after 308 iterations, for Grad after 175 iterations and for FaMSA-s after 54 iterations, none of these iterative schemes attaining the performance of Algorithm \ref{alg-k-f-id1}.

For the second example of the Fermat-Weber problem we considered in case $k=5$ the points in the plane and the weights (see \cite{drez})
\begin{equation}\label{exampleFW2}
\begin{aligned}
c_1=(0,0), c_2=(1,0), c_3=(0,1), & \ c_4=(1,1), c_5=(100,100)\\
\mbox{and} \ \lambda_1=\lambda_2=\lambda_3 = \lambda_4 = 1, & \ \lambda_5=4,
\end{aligned}
\end{equation}
respectively.  The optimal location point is $\widehat x = (100,100)$ and, by choosing the relative center of gravity $x^0=(50.25,50.25)$ as starting point,
we found out that not only the classical Weiszfeld algorithm, but also the approach from \cite{golma} described above in connection to each of the methods Grad, Nest and FaMSA-s did not achieve a point which is optimal up to three decimal points after millions of iterations. On the other hand, Algorithm \ref{alg-k-f-id1} with $\sigma=0.0001$, $\tau =9999$ and $y^0_k=(0,0), k=1,...,5$, achieved a point which is optimal up to three decimal points after $478$ iterations. This example is more than illustrative for the performance of the primal-dual Algorithm \ref{alg-k-f-id1} in comparison to some classical and recent algorithms designed for the Fermat-Weber problem. Figure \ref{fig:bilderlocation2} shows the progression of the iterations, while $PD_n$ provides the value of the objective function at iteration $n$.
\begin{figure}[ht]	
	\centering
	\includegraphics*[viewport= 158 254 449 575, width=0.7\textwidth]{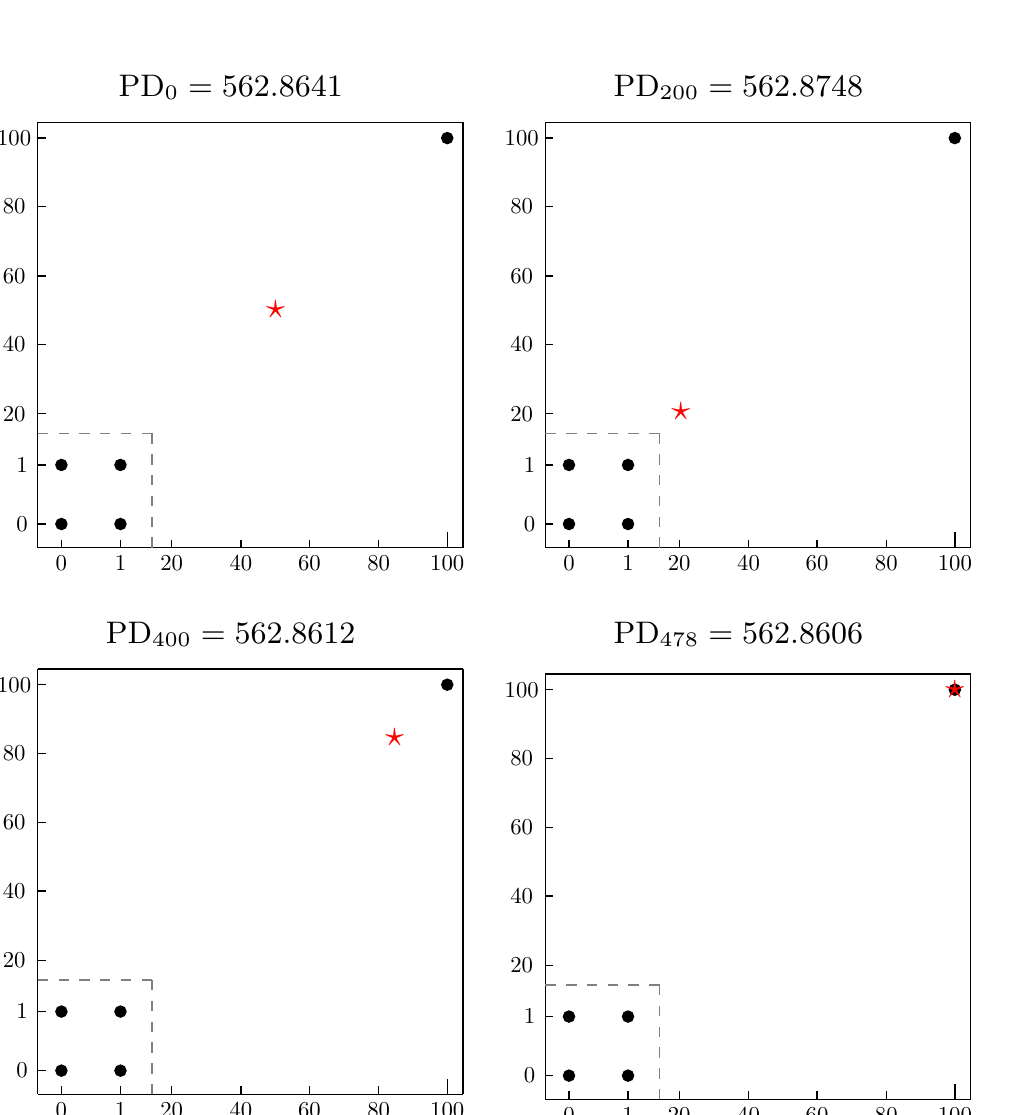}
	\caption{The progression of iterations of Algorithm \ref{alg-k-f-id1} when solving the Fermat-Weber problem for points and weights given by \eqref{exampleFW2}}
	\label{fig:bilderlocation2}	
\end{figure}

\section{Conclusions}\label{sec6}
In this paper we motivate and formulate a primal-dual algorithm which solves both the problem of finding the zeros of the sum of a maximally monotone operator with the composition of another maximally monotone operator with a linear continuous operator and its Attouch-Th\'era-type dual inclusion problem in Hilbert spaces. We also investigate the convergence of the provided iterative scheme and show how one can derive from it a splitting algorithm for finding the zeros of the sum of compositions of maximally monotone operators with linear continuous operators. As particular instances of the general schemes algorithms for solving several classes of nondifferentiable convex optimization problems are introduced. Among them one can rediscover the primal-dual algorithm from \cite{ch-pck} for solving the problem which assumes the minimization of the sum of a proper, convex and lower semicontinuous function with the composition of another proper, convex and lower semicontinuous function with a linear continuous operator. The performances of the provided algorithm are emphasized in the context of some applications in image deblurring and denoising and in location theory.\vspace{1ex}

\noindent{\bf Acknowledgements.} The authors are thankful to Christopher Hendrich for the implementation of the numerical schemes to which comparisons of the primal-dual algorithm were made.

\end{document}